%% file: M3.tex
\documentclass[12pt,reqno]{amsart}
\usepackage{amssymb,amscd}
\usepackage[alphabetic]{amsrefs}
\usepackage[all]{xy}
\usepackage[headings]{fullpage}
\usepackage{libertine}
\usepackage{tikz}

\include{formatting}
\numberwithin{equation}{section}

\include{macros}
\renewcommand{\and}{\quad\text{and}\quad}

\def\J{\mathcal{J}}
\def\HJ{H^J}
\def\HlJ{H^{\le J}}

\def\prH{{}'H^J}
\def\pH{{}^pH^J}
\newcommand{\Rs}{\mathsf{R}}
\newcommand{\Cs}{\mathsf{C}}

\subjclass{Primary 11G20, 14D10, 14H30;
Secondary 11C20, 14G17, 15B33}

\keywords{Curve, Artin-Schreier cover, Cartier operator, $a$-number,
  Jacobian, $p$-torsion, Ekedahl--Oort type}

\begin{document}

\begin{abstract}
  Let $k$ be a perfect field of characteristic $p>0$, and let $d$ be a
  positive integer not divisible by $p$.  We define a non-empty
  Zariski open subset $U$ of the space of polynomials of degree $d$,
  and for $f(x)\in U(k)$, we compute the $a$-number of the curve
  defined by $y^p-y=f(x)$.  This $a$-number realizes a lower bound
  given by Booher and Cais, so the latter is tight.  Our result also
  implies that the bound of Booher and Cais for minimal $a$-numbers of
  Artin-Schreier covers of ordinary curves is tight.
\end{abstract}

\title[Minimal $a$-numbers]{Minimal $a$-numbers of Artin--Schreier
  covers of ordinary curves} 

\author{Bryden Cais}
\address{Department of Mathematics \\ University of Arizona
 \\ Tucson, AZ~~85721 USA}
\email{cais@arizona.edu}

\author{Douglas Ulmer}
\address{Department of Mathematics \\ University of Arizona
 \\ Tucson, AZ~~85721 USA}
\email{ulmer@arizona.edu}

%\date{\today}

\maketitle

\section{Introduction}
Our aim in this paper is to give a construction of curves with minimal
$a$-number in the context of Artin--Schreier covers.

\subsection{Background}
Let $k$ be a perfect field of characteristic $p$ with algebraic
closure $\kbar$, and let $Z$ be a smooth, irreducible, projective
curve of genus $g_Z$ over $k$.  There is an additive, $p^{-1}$-linear
map called the Cartier operator
\[\CC: H^0(Z,\Omega^1_Z)\to H^0(Z,\Omega^1_Z)\]
defined as follows.  Choose $x\in k(Z)\setminus k(Z)^p$.  Then
$\{1,x,\dots,x^{p-1}\}$ is a basis of $k(Z)$ as a vector space over
the subfield $k(Z)^p$.  Given $\omega\in H^0(Z,\Omega^1_Z)$, write
\[\omega=\left(f_0+f_1x+\cdots+f_{p-1}x^{p-1}\right)\,dx,\]
with $f_0,\dots,f_{p-1}\in k(Z)^p$.  Then $\CC$ is defined by
\[\CC(\omega)=f_{p-1}^{1/p}\,dx.\]
Clearly $\CC$ is additive in $\omega$.  The phrase ``$p^{-1}$-linear''
means that $\CC(f\omega)=f^{1/p}\CC(\omega)$ for $f\in k(Z)^p$.  See
\cite{Cartier57} for the original definition and \cite{AchterHowe19}
for a careful account of representing semi-linear operators by
matrices in this and related contexts.

The \emph{$a$-number} of $Z$ is defined as
\[a_Z:=\dim_k\ker\left(\CC: H^0(Z,\Omega^1_Z)
    \to H^0(Z,\Omega^1_Z)\right).\]

The terminology comes from the fact that $a_Z$ is the largest integer
$a$ such that there is an injection of group schemes over $k$
\[\left(\alpha_p\right)^a\into J_Z[p],\]
where $J_Z[p]$ is the $p$-torsion group scheme of the Jacobian of $Z$.
The integer $a_Z$ is also a measure of the complexity of $J_Z[p]$ in
that it is the minimal number of generators of the local-local part of
the Dieudonn\'e module of $J_Z[p]$ over the Dieudonn\'e ring.  Clearly
$0\le a_Z\le g_Z$, and it is known that $a_Z=0$ if and only if $Z$
satisfies $J_Z[p](\kbar)=(\Z/p\Z)^{g_Z}$, and $a_Z=g_Z$ if and only if
$J_Z$ is isomorphic over $\kbar$ to the product of $g_Z$ copies of a
supersingular elliptic curve.
% Need \kbar: See Oort, Math A, 1975
We omit details as neither of these connections will be needed in this
article, but see \cite{Pries08} for a quick introduction to these and
related results.

\subsection{Results}
Let $X=\P^1$ over $k$ with coordinate $x$.  Let $d$ be a positive
integer not divisible by $p$, and let $A\subseteq\A^{d+1}$ be the open
subset of tuples $(a_0,\dots,a_d)$ with $a_d\neq0$.  For
$a=(a_0,a_1,\dots,a_d)\in A(k)$ let
\[f_a(x)=a_0+a_1x+\cdots+a_dx^d\in k[x].\]
Let $\pi:Y_a\to X$ be the Artin--Schreier cover of $X$ defined by
\[Y=Y_a:\qquad y^p-y=f_a(x).\]
It is well known (see, e.g., \cite[Prop.~6.4.1]{StichtenothAFFC}) that
the genus of $Y$ is $(p-1)(d-1)/2$. 

Booher and Cais gave lower and upper bounds on the $a$-number of $Y$
in \cite{BooherCais20}.  These upper and lower bounds coincide when
$p=2$, so both are sharp.  Thus we may assume from now on that $p>2$.
The Booher--Cais lower bound was simplified by Booher, Pries and
several collaborators in \cite{BooherPriesetal22} (initiated at PROMYS
2019), and they proved that the lower bound is optimal for $p=3$ and
$p=5$.  In \cite{Shi26}, Shi gives further examples of Artin--Schreier
covers with minimal $a$-number for every $p$. 

Our main result shows that the Booher--Cais bound is optimal for every
$p$ and every pole order $d$.  We write $\lfloor x \rfloor$ for the
largest integer $\le x$, and for $d$ prime to $p$ we define an integer
\[L(d):=\sum_{\ell=1}^{(p-1)/2}
    \left\lfloor\frac{(p-\ell)d}{p}\right\rfloor
    -\left\lfloor\frac{((p^2+1-2\ell p)d}{2p^2}\right\rfloor.\]

\begin{thm}\label{thm:main}
  There is a non-empty Zariski open subset $U$ of $A$ such that if $a\in
  U(k)$, then the $a$-number of $Y$ is $L(d)$.
\end{thm}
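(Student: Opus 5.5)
Since Booher and Cais prove that $L(d)$ is a lower bound for $a_Y$ for every $a\in A(k)$, it suffices to show that the rank of the Cartier operator is at least $g_Y-L(d)$ on a non-empty open set. Rank is lower semicontinuous in families, so the locus where $\operatorname{rank}\CC\ge g_Y-L(d)$ is Zariski open in $A$. It is therefore enough to show this locus is non-empty, i.e.\ to exhibit one polynomial (possibly over $\kbar$, then descend the open set) whose $a$-number equals $L(d)$. Alternatively, we can show that a suitable minor of the Cartier matrix is a nonzero polynomial in $a_0,\dots,a_d$.

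\textbf{Basis and filtration.} We use the standard basis of $H^0(Y,\Omega^1_Y)$:
\[\omega_{ij}=x^iy^j\,dx,\qquad i,j\ge0,\quad ip+jd\le (p-1)(d-1)-2+\ldots\]
that is, the monomials with $pi+dj<(p-1)d-p$ (the usual pole-order condition at $\infty$). Write $y^j\,dx$ in the basis over $k(Y)^p$. Using $y=y^p-f(x)$ gives
\[y^j=\sum_m\binom{j}{m}(y^p)^{j-m}(-f)^m ,\]
so
\[\CC(x^iy^j\,dx)=\sum_m\binom{j}{m}(-1)^m y^{j-m}\,\CC\!\left(x^if^m\,dx\right).\]
Moreover $\CC(x^n\,dx)=x^{(n+1)/p-1}\,dx$ when $p\mid n+1$, and $0$ otherwise. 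This makes the matrix block triangular with respect to the filtration by the $y$-degree $j$: $\CC$ maps the span of $\{y^{j'}\}_{j'\le j}$ into itself. The diagonal block for $y$-degree $j$ is the map $x^i\mapsto \CC(x^if^j\,dx)$, the $j=m$ term, up to the unit $(-1)^j$. Its entries are coefficients of $x^{pr-1-i}$ in $f^j$, raised to the power $1/p$.

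\textbf{Lower bound on the rank.} Block triangularity gives only $\operatorname{rank}\CC\ge\sum_j\operatorname{rank}(\text{diagonal block}_j)$, and that sum corresponds to the Booher--Cais upper bound, not to $L(d)$. The improvement must come from the off-diagonal terms $m<j$, which map $y$-degree $j$ into $y$-degree $j-m$. So I would instead order the basis by the $\infty$-adic valuation, weighting $x$ by $p$ and $y$ by $d$. I would then look for a leading-term (Newton polygon / initial form) argument for a specific simple polynomial such as $f=x^d+c\,x^{d-1}$, or a generic $f$ with a torus degeneration. The aim is to identify an explicit set of $g-L(d)$ basis vectors $\omega_{ij}$ whose images have distinct leading monomials with nonzero leading coefficients. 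Those leading coefficients are binomial-type multinomial coefficients $\binom{j}{m}\binom{m}{\ldots}a_d^{\ast}a_{d-1}^{\ast}$, and they are nonzero mod $p$ because $j<p$.

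\textbf{Main obstacle.} The hard step is the combinatorial one: choosing, for each $y$-degree $j$ with $1\le j\le p-1$, which target degrees $j-m$ to use. The choice must make the chosen leading monomials pairwise distinct, and their count must match $g_Y-L(d)$ exactly, with the floor expressions $\lfloor (p-\ell)d/p\rfloor-\lfloor (p^2+1-2\ell p)d/(2p^2)\rfloor$ appearing as the counts. I expect the pairing $j\leftrightarrow p-1-j$, i.e.\ $\ell$ and $p-\ell$, to organize this, and I would first test it on small $p$ such as $p=3,5$ against the results of Booher, Pries et al. If distinct leading monomials cannot be arranged directly, the fallback is to compute the determinant of a chosen square submatrix as a polynomial in the $a_i$. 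One would then show that it has a nonvanishing extremal monomial with respect to a monomial order, which again reduces to a nonzero binomial coefficient modulo $p$.
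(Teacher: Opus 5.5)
Your opening reduction is fine. Grant the Booher--Cais bound $a_Y\ge L(d)$, which is the easy half anyway, since $\CC(H^{\le(p-1)/2})\subseteq\bigoplus_{\ell}H(\ell,(p-1)/2)$ by a degree count. Raise the entries of the Cartier matrix to the $p$-th power so they are polynomials in $a$. Then it suffices to exhibit a $(g_Y-L(d))$-minor that is a non-zero element of $k[a_0,\dots,a_d]$. But that is the entire content of the theorem, and the proposal gives no argument for it: the ``main obstacle'' you describe is exactly the missing proof. The paper needs three ingredients for it.

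First, the precise targets. The $y^{\ell-1}$-component of $\CC(H^{\le J})$ lies in $H(\ell,J)$, spanned by $y^{\ell-1}x^{i-1}\,dx$ with $i<(p(J+1-\ell)-J)d/p^2$. This holds because $\deg(f^{j-\ell}x^{i'})<(j-\ell)d+(p-j)d/p$ for $\omega_{i',j}$ with $j\le J$. At $J=(p-1)/2$ these dimensions are the subtracted terms $\lfloor(p^2+1-2\ell p)d/(2p^2)\rfloor$ in $L(d)$.

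Second, a graded dichotomy. The map ${}^pH^J\to H(J,J)$ is always an isomorphism. The map $\Phi_J\colon{}'H^J\to\bigoplus_{\ell<J}H(\ell,J)/H(\ell,J-1)$ must be shown generically surjective for $J\le(p+1)/2$ and injective for $J\ge(p+1)/2$, after comparing $|R_J|$ and $|C_J|$ via $(i,\lambda)\mapsto|pi-\lambda d|$. Induction then yields $\CC(H^{\le(p-1)/2})=\bigoplus_\ell H(\ell,(p-1)/2)$ and $\ker\CC\subseteq H^{\le(p-1)/2}$, which gives exactly $L(d)$. So the organizing threshold is $J=(p+1)/2$, not a pairing $j\leftrightarrow p-1-j$.

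Third, non-vanishing of the resulting square minors of $(b_{\lambda,pi-i'})$. The paper uses the lexicographic order on $k[a_0,\dots,a_d]$ with $a_d$ largest. Every non-zero entry has $LM(b_{\lambda,n})=a_{n-(\lambda-1)d}\,a_d^{\lambda-1}$ with coefficient prime to $p$. A greedy matching by decreasing $\phi=pi-i'-(\lambda-1)d$, which is injective along rows and along columns, then produces a unique permutation of maximal leading monomial.

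The concrete steps you do suggest would also fail. The diagonal block in $y$-degree $j$ is the $m=0$ term $x^i\,dx\mapsto\CC(x^i\,dx)$, which is non-zero only when $p\mid i+1$; the $m=j$ term lands in $y$-degree $0$.

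Choosing basis vectors whose individual images have distinct leading terms for your weight ($p$ on $x$, $d$ on $y$) cannot reach the rank. For $p=5$, $d=18$ the generic rank is $g_Y-L(d)=34-17=17$. Taking the term of largest weight as leading, the $34$ images have only $14$ distinct leading terms for generic $a$. In $y$-degree $2$ alone, the six sources $x^{i'-1}y^2\,dx$ with $i'\in\{1,2,3,4,6,7\}$ have only four. The opposite choice of leading term is worse. So cancellation among columns, i.e.\ a genuine determinant computation, is unavoidable.

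Finally, $f=x^d+cx^{d-1}$ is too sparse. In the paper's example $p=5$, $d=18$, $J=3$, the first row of $M_J$ is $(a_{14},a_{13},a_{12},a_{11},a_9,a_8)$, which vanishes for this $f$. Then $\Phi_3$ is singular. Since $\CC(H^{\le2})\subseteq H(1,2)\oplus H(2,2)$, either that inclusion is strict, or a kernel vector of $\Phi_3$ lifts to an element of $\ker\CC$ outside $H^{\le 2}$. Either way $a_Y>L(18)$. For the same reason, no argument whose pivots are entries such as $b_{\lambda,\lambda d}=a_d^{\lambda}$ or $b_{\lambda,\lambda d-1}=\lambda a_{d-1}a_d^{\lambda-1}$ can succeed. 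The extremal monomial in that example is $a_6a_{12}a_{16}a_{18}^6$, which involves coefficients far from the top.
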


(This is, up to a simple change of variables, the bound
$L_{(p+1)/2}(d)$ given in \cite{BooherPriesetal22}.)

Our result also has consequences for Artin--Schreier covers of general
ordinary curves $X$, i.e., curves with $a_X=0$.  Let $X$ be a (smooth,
irreducible, projective) ordinary curve over $k$, and
let $\pi:Y\to X$ be a ramified Artin--Schreier cover of $X$ defined by
\[Y: y^p-y=f,\quad\text{for $f\in k(X)$}.\]
If $P_1,\dots,P_B$ are the branch points of $\pi$, let $d_1,\dots,d_B$
be the ramification invariants of $\pi$, i.e., let $d_b$ be the
unique break in the lower ramification filtration of $\gal(Y/X)$ at
$P_b$.
The integers $d_b$ are necessarily prime to $p$.

\begin{cor}
  Suppose that $k$ is algebraically closed and $X$ is ordinary, and
  choose an integer $B\ge 1$, points $P_1,\dots,P_B\in X(k)$, and positive
  integers $d_1,\dots,d_B$ all relatively prime to $p$.  Then there
  exists an Artin--Schreier cover $\pi:Y\to X$ ramified precisely at
  $P_1,\dots,P_B$ with ramification invariants $d_1,\dots,d_B$ such
  that $a_Y$ is minimal among all covers ramified at $B$ points with
  invariants $d_1,\dots,d_B$, namely
  \[a_Y= \sum_{b=1}^B L(d_b).\]
\end{cor}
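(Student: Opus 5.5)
The corollary should follow by combining Theorem~\ref{thm:main} with the local-to-global results of Booher and Cais \cite{BooherCais20}. Booher and Cais prove a lower bound $a_Y\ge\sum_b L(d_b)$ for any Artin--Schreier cover of an ordinary curve $X$ ramified at $B$ points with invariants $d_1,\dots,d_B$. We take this bound in the simplified form of \cite{BooherPriesetal22}, up to the change of variables noted after Theorem~\ref{thm:main}. So minimality will be automatic once we exhibit a cover with $a_Y=\sum_b L(d_b)$. The plan is to build such a cover from the local data provided by Theorem~\ref{thm:main}.

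First, I would recall the mechanism behind the Booher--Cais bound. For $X$ ordinary, the $a$-number of $Y$ is controlled by local contributions at the branch points. In particular, for the cover defined by $f$, each branch point $P_b$ contributes a term depending only on a finite-order truncation of the principal part of $f$ at $P_b$, namely the polar part of order $d_b$ in a uniformizer $t_b$. Moreover, the inequality $a_Y\ge\sum_b(\text{local term})$ is an equality when each local term is minimized. Concretely, I would pass to the Dieudonn\'e module, or the de~Rham cohomology with its $V$-action, of $Y$. Using that $\pi_*$ of the structure sheaf and differentials decompose under $\Z/p$, and that $X$ ordinary makes $V$ bijective on the global part, we can reduce to a local calculation on $k((t_b))$-principal parts. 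I expect the needed statement, that the $a$-number equals a sum of local quantities determined by the principal parts when $X$ is ordinary, is already in \cite{BooherCais20}. If it is not, proving it would be the main obstacle.

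Second, I would identify the local quantity at $P_b$ with the $a$-number of the model cover $y^p-y=g_b(1/t)$ of $\P^1$. Here $g_b$ is the polar part of $f$ at $P_b$, which is a polynomial of degree $d_b$ in $x=1/t$. This model is exactly the one-point situation of Theorem~\ref{thm:main}, since $\P^1$ is ordinary and has a single branch point at $\infty$. Theorem~\ref{thm:main} then says the local term equals $L(d_b)$ whenever the coefficient vector of $g_b$ lies in the nonempty open set $U$.

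Third, I need to realize the prescribed principal parts globally. Since $k$ is algebraically closed, $U(k)$ is nonempty, so we can choose polynomials $g_b$ of degree $d_b$ with coefficients in $U(k)$. By Riemann--Roch, or by strong approximation applied to the divisor $\sum_b N P_b$ with $N\gg0$, there is $f\in k(X)$ that is regular away from $P_1,\dots,P_B$ and whose principal part at each $P_b$ agrees with $g_b(1/t_b)$. At worst we adjust by functions whose poles at each $P_b$ are bounded by $d_b-1$, and this does not change membership in the open set $U$ after a harmless modification. The leading coefficient is nonzero and $p\nmid d_b$, so the cover is ramified exactly at the $P_b$ with breaks $d_b$. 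Hence $a_Y=\sum_b L(d_b)$, and together with the lower bound this gives minimality.

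The delicate points are two. One is that the local term genuinely depends only on the principal part. The other is the compatibility of the uniformizer choice with the coordinates used to define $U$. For the latter, I would use that $U$ may be shrunk to be stable under the relevant changes of variable, or simply choose $f$ generically, since the locus of good principal parts is open and nonempty in the affine space of principal parts.
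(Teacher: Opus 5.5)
There is a genuine gap, and it is the one you flagged yourself. Step~1 needs that, for ordinary $X$, $a_Y$ \emph{equals} a sum over the branch points of local quantities depending only on the principal parts of $f$. This is not in \cite{BooherCais20}, which proves only a lower and an upper bound in terms of $a_X$ and the $d_b$. As far as I know, no such locality statement is available. There is also no evident reason for it to hold. In $\CC(y^{j-1}\omega)=\sum_{\ell}\binom{j-1}{\ell-1}(-1)^{j-\ell}y^{\ell-1}\CC(f^{j-\ell}\omega)$, the off-diagonal terms are global differentials on $X$. Their polar parts depend on the Laurent expansions of $\omega$ and of $f$ beyond their principal parts, so the kernel need not split as a sum over branch points. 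Without Step~1 you have no upper bound on $a_Y$, and the lower bound alone proves nothing.

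The paper's proof is simply \cite[Thm.~1.2]{BooherPriesetal22} combined with Theorem~\ref{thm:main}. That theorem uses formal patching to glue one-point covers of $\P^1$ into a family whose general member is a $\Z/p\Z$-cover of $X$ with the prescribed branch points and invariants. Here the one-point covers are $y^p-y=g_b(x)$ with $a$-number $L(d_b)$, supplied by Theorem~\ref{thm:main}. Upper semicontinuity of the $a$-number, together with $a_X=0$, then gives $a_Y\le\sum_b L(d_b)$, and the Booher--Cais lower bound forces equality. Only an inequality is ever needed, so no exact local formula for $a_Y$ is required.

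Step~3 is also wrong as written. On a curve $X$ of positive genus you cannot prescribe the principal parts at $P_1,\dots,P_B$ of a function with no other poles. For $N\gg0$ the map from $L(\sum_b NP_b)$ to principal parts of order $\le N$ has cokernel $H^1(X,\mathcal{O}_X)$, of dimension $g_X$. Strong approximation does not rescue this, since it needs an auxiliary place where poles are allowed. Adjusting lower-order terms need not help either: for $B=1$, $d_1=1$, the only adjustments are constants, and no function with a single simple pole exists.

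What is true, for $k$ algebraically closed, is that prescribed \emph{local Artin--Schreier classes} at the $P_b$ can be realized globally, because $F-1$ is surjective on $H^1(X,\mathcal{O}_X)$. But then the principal parts of $f$ agree with the $g_b(1/t_b)$ only modulo $\wp(k((t_b)))$. So even a repaired version of Step~1 would have to be stated in terms of local Artin--Schreier classes rather than exact principal parts.
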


This is exactly \cite[Thm.~1.2]{BooherPriesetal22} plus our
Theorem~\ref{thm:main}. 

\subsection{Strategy}
The structure of the formula in Theorem~\ref{thm:main} reflects our
strategy of proof, namely the use of a certain filtration on
$H=H^0(Y,\Omega^1_Y)$.  In Section~\ref{s:fils}, we define subspaces
\[0\subseteq H^{\le1}\subseteq H^{\le2}\subseteq\cdots\subseteq H^{\le
    p-1}=H,\]
and in Theorem~\ref{thm:ref1} we compute the dimension of the
kernel of $\CC$ restricted to each $H^{\le J}$ (with $a\in U(k)$ for a certain
non-empty open $U\subseteq A\subseteq\A^{d+1}$).  The proof of
Theorem~\ref{thm:ref1} is reduced to Theorem~\ref{thm:ref2} which
asserts the surjectivity or injectivity of certain maps between
subquotients of $H$.  To establish Theorem~\ref{thm:ref2}, we regard
the coefficients $a_0,\dots,a_d$ as variables and prove that a certain
determinant is a non-zero polynomial in these variables, and so the
set where it vanishes is a proper Zariski closed subset of the space
of coefficients.

In the end, we find more generally that for $J\le (p+1)/2$, and
generic $a$,
  \begin{equation}\label{eq:ker-fil}
\dim_k\ker\left(\CC:H^{\le J}\to H\right)=
    \sum_{\ell=1}^J
    \left\lfloor\frac{(p-\ell)d}{p}\right\rfloor
    -\left\lfloor\frac{(p(J+1-\ell)-J)d}{p^2}\right\rfloor,    
  \end{equation}
  and that for $J\ge(p-1)/2$,
  \[\ker\CC|_{H^{\le J}}=\ker\CC|_{H^{\le(p-1)/2}}.\]
The main theorem is an immediate consequence.  \emph{A posteriori}, we
see that the right-hand side of \eqref{eq:ker-fil} is the quantity
$L_J(d)$ studied in \cite{BooherPriesetal22}.

\section{Filtrations and refined statements}\label{s:fils}
Recall that $k$ is a perfect field of characteristic $p$.  Let
$X=\P^1$ over $k$.
Fix an $a\in A(k)$ and let $f=f_a$ and $Y=Y_a$
as in the introduction.  Let $Q_\infty$ be the unique point of $Y$ over
$\infty\in X$.

\begin{defn}\label{def:omega}
For  $0<j<p$ and $0<i<(p-j)d/p$, 
define differentials on $Y$ by
\[\omega_{i,j}=y^{j-1}x^{i-1}\,dx.\]
\end{defn}

One computes that
\[\ord_{Q_\infty}\omega_{ij}=pd-ip-jd-1\ge0,\]
so the $\omega_{i,j}$ are regular on $Y$.

The values of $\ord_{Q_\infty}\omega_{i,j}$ are easily seen to be
distinct modulo $pd$, so they are distinct integers which implies that
the $\omega_{i,j}$ are linearly independent.  Since there are
$(p-1)(d-1)/2=g_Y$ of them, the $\omega_{i,j}$ form a
basis of $H^0(Y,\Omega^1_Y)$.

We now define certain subspaces of $H:=H^0(Y,\Omega^1_Y)$ which will
be useful in thinking about the domain of $\CC$.

\begin{defn}\label{def:HJ}
For $0<J<p$, define
\begin{align*}
\HlJ&=\left\<\left.\omega_{i,j}\right| 0<j\le J,
        0<i<(p-j)d/p\right\>\\
\HJ&=\left\<\left.\omega_{i,j}\right| j=J,
        0<i<(p-j)d/p\right\>\\
\pH&=\left\<\left.\omega_{i,j}\right| j=J,
        0<i<(p-j)d/p, p|i\right\>\\
\prH&=\left\<\left.\omega_{i,j}\right| j=J,
        0<i<(p-j)d/p,p\nodiv i\right\>.
\end{align*}
(Here $\<\cdots\>$ denotes the $k$-span of $\cdots$.)  
\end{defn}

It is clear that
\[\HlJ=\oplus_{\J\le J}H^{\J}\and \HJ=\pH\oplus\prH.\]

The following subspaces will be useful in considering the range of
$\CC$.

\begin{defn}\label{def:H(l,e)}
  For $0<\ell<p$ and $\ell\le e<p$, let
\[  H(\ell,e):=\left\<\omega_{i,j}\left|j=\ell,
      0<i<\left\lfloor\frac{(p(e+1-\ell)-e)d}{p^2}\right\rfloor
    \right.\right\>\]
and we set  $H(\ell,e)=0$ when $e<\ell$.   (Here $\lfloor x\rfloor$
denotes the largest integer $\le x$.)
\end{defn}

The letter $\ell$ stands for ``level'' and $e$ for ``extent''.  The
definition is arranged to make Proposition~\ref{prop:containments}
true.

To help digest these definitions, consider Figure~\ref{fig:I}
where the dots denote differentials $\omega_{i,j}$ for $i$ and $j$ as
indicated.  Then $\HJ$ is the span of the row at level $J$, $\HlJ$
denotes the span of all of the rows up to level $J$, and $H(\ell,e)$
is the span of the part of the row at level $j=\ell$ to the left of a
point determined by $e$.  This point moves to the right as $e$
increases.

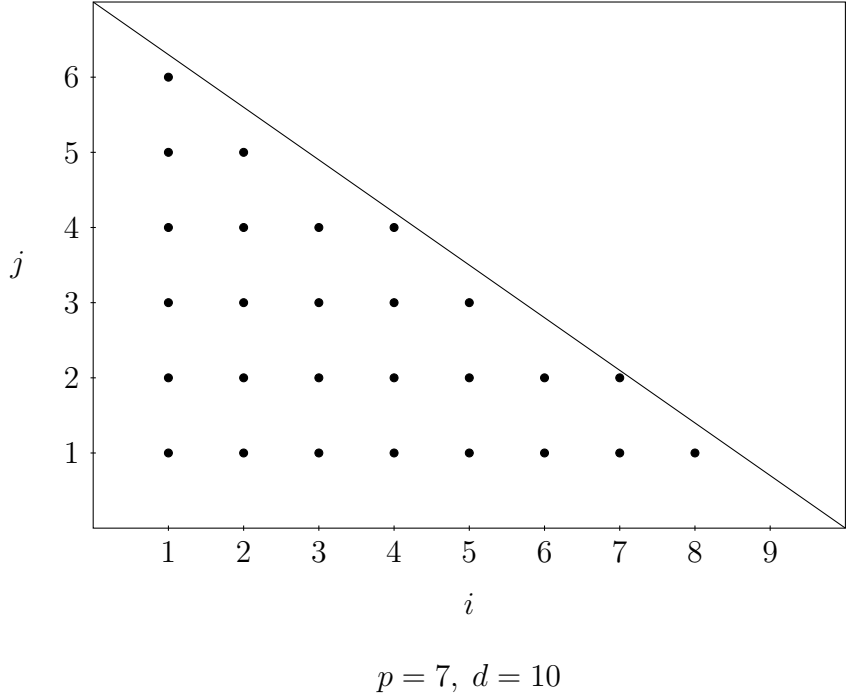
\begin{figure}
\begin{center}
\begin{tikzpicture}[scale=1]
  \draw[step=1cm] (0,0) rectangle (10,7);
  \foreach \x in {1,2,3,4,5,6,7,8,9}
   \draw (\x cm,1pt) -- (\x cm,-1pt) node[anchor=north] {$\x$};
  \foreach \y in {1,2,3,4,5,6}
   \draw (1pt,\y cm) -- (-1pt,\y cm) node[anchor=east] {$\y$};
  \node (i) at (5,-1) {$i$};
  \node (j) at (-1,3.5) {$j$};
  \node (l) at (5,-2) {$p=7,\ d=10$};
  \foreach \x in {1,2,3,4,5,6,7,8}
  \filldraw[black] (\x,1) circle (1.5pt);
  \foreach \x in {1,2,3,4,5,6,7}
  \filldraw[black] (\x,2) circle (1.5pt);
  \foreach \x in {1,2,3,4,5}
  \filldraw[black] (\x,3) circle (1.5pt);
  \foreach \x in {1,2,3,4}
  \filldraw[black] (\x,4) circle (1.5pt);
  \foreach \x in {1,2}
  \filldraw[black] (\x,5) circle (1.5pt);
  \foreach \x in {1}
  \filldraw[black] (\x,6) circle (1.5pt);
  \draw[step=1cm] (0,7) -- (10,0);
\end{tikzpicture}
\end{center}
\caption{An index set for differentials}\label{fig:I}
\end{figure}
\bigskip

\begin{rems}\mbox{}
  \begin{enumerate}
  \item The subspace $\HlJ$ can be defined intrinsically using the
    action of $\gal(Y/X)$ on $H$.  As far as we know, the subspaces
    $H(\ell,e)$ do not have an intrinsic definition (although they are
    in some sense characterized by Theorem~\ref{thm:ref1}).
  \item In a forthcoming %future\bry{forthcoming?}
    paper (generalizing \cite{CaisUlmer25} to ramified covers), we use
    a completed diagram with dots for all $0<j<p$, $0<i<d$ to index a
    convenient basis for $H^1_{dR}(Y)$ and use it to study the
    $p$-torsion group scheme $J_Y[p]$.
  \end{enumerate}
\end{rems}

As we will see below, the following result refines
Theorem~\ref{thm:main}.

\begin{thm}\label{thm:ref1} There is a non-empty Zariski open subset $U\subseteq
  A$ such that if $a\in U(k)$, and $Y=Y_a$, we have:
  \begin{enumerate}
  \item For $0<J\le(p+1)/2$, the restriction of $\CC$ to
    $H^{\le J}$ induces a surjection    
    \[H^{\le J}\labeledlongto{\CC}
     \bigoplus_{\ell=1}^{J}
      H(\ell,J).\]
  \item We have
    \[\ker\left(H\labeledlongto{\CC} H\right)\subseteq H^{\le (p-1)/2}.\]
  \end{enumerate}
\end{thm}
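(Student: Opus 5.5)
The plan is to compute $\CC(\omega_{i,j})$ explicitly and read off which rows of Figure~\ref{fig:I} it reaches. Write $y^{j-1}=(y^p-f)^{j-1}$ and expand:
\[
y^{j-1}=\sum_{m=0}^{j-1}\binom{j-1}{m}y^{pm}(-f)^{j-1-m}.
\]
By $p^{-1}$-linearity,
\[
\CC(\omega_{i,j})=\sum_{m}\binom{j-1}{m}(-1)^{j-1-m}\,y^{m}\,\CC\bigl(f^{j-1-m}x^{i-1}dx\bigr).
\]
Here $\CC(x^{n}dx)=x^{(n+1)/p-1}dx$ when $p\mid n+1$, and $0$ otherwise. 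So $\CC(\omega_{i,j})$ has a component at each level $\ell=m+1\le j$. The level-$\ell$ component is a combination of the $\omega_{i',\ell}$ with
\[
pi'\le i+(j-\ell)d.
\]
Combining this with $i<(p-j)d/p$ and imposing $i'<(p(e+1-\ell)-e)d/p^2$ explains the floors in Definition~\ref{def:H(l,e)}. The first step is therefore Proposition~\ref{prop:containments}: the image $\CC(H^{\le J})$ lies in $\bigoplus_{\ell\le J}H(\ell,J)$, and in fact the map is block upper triangular for the filtration by $J$. I would also note that the part of $H^J$ reaching level $\ell=J$ only involves $m=j-1$, i.e.\ $\CC(y^{J-1}x^{i-1}dx)=y^{J-1}\CC(x^{i-1}dx)$. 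This is nonzero only for $p\mid i$, so ${}^pH^J$ maps isomorphically onto the level-$J$ part of the image, while ${}'H^J$ maps into lower levels.

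For part (1) I would induct on $J$. Suppose $H^{\le J-1}$ surjects onto $\bigoplus_{\ell<J}H(\ell,J-1)$. Using the triangular structure, surjectivity of $H^{\le J}$ onto $\bigoplus_{\ell\le J}H(\ell,J)$ reduces to two surjections on graded pieces:
\begin{itemize}
\item ${}^pH^J\to H(J,J)$, which follows from the diagonal computation and a counting check $\lfloor ((p-J)d/p)/p\rfloor$-type equality with the floor defining $H(J,J)$;
\item ${}'H^J$, plus what remains from ${}^pH^J$, onto the cokernel quotient $\bigoplus_{\ell<J}H(\ell,J)/H(\ell,J-1)$.
\end{itemize}
This is the content of Theorem~\ref{thm:ref2}. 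The second item is the heart of the matter. It is a map between spaces of explicit dimension, with matrix entries that are polynomials in $a_0,\dots,a_d$: coefficients of $f^{J-\ell}$, raised to $1/p$ powers, which we may absorb by working with $a^{1/p}$ or by noting that Zariski density is unaffected. The plan is to show a suitable maximal minor is a nonzero polynomial. I would do this by specializing, e.g.\ to $f=x^d+\epsilon x^{d-1}$ or by taking leading terms in a monomial order where $a_d$ and $a_{d-1}$ dominate. The aim is that the minor becomes triangular with nonzero diagonal entries: binomial coefficients $\binom{J-1}{\ell-1}$ and multinomial coefficients with indices $<p$, hence units mod $p$. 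Intersecting the finitely many nonvanishing loci over all $J\le (p+1)/2$ gives $U$.

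For part (2), I would show that for $J>(p-1)/2$ the map $H^J\to H/\bigl(\text{image of }\CC(H^{\le J-1})\bigr)$ is injective. Equivalently, $\CC$ on $H^{\le J}$ has no new kernel beyond $\ker\CC|_{H^{\le J-1}}$. By the same triangularity, it suffices to show that the graded map from $H^J$ to the appropriate quotient is injective for generic $a$, which is again a maximal-minor nonvanishing statement of the same kind (the second half of Theorem~\ref{thm:ref2}). Dimension counting should show that for $J\ge (p+1)/2$ the target quotient has dimension at least $\dim H^J$, which makes injectivity plausible.

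The main obstacle is the nonvanishing of these determinants. The combinatorics of which monomials $x^{i-1}f^{J-\ell}$ contribute to which $\omega_{i',\ell}$ is intricate. Finding a specialization or term order that makes the minor visibly triangular with unit diagonal, uniformly in $p$, $d$, and $J$, is where I expect most of the work to be.
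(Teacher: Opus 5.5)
Your reduction is the paper's own. Expanding $(y^p-f)^{j-1}$, the block-triangular containments, ${}^pH^J\cong H(J,J)$ with ${}'H^J$ landing in levels $<J$, the induction for part (1) and the top-down peeling for part (2) are exactly Proposition~\ref{prop:containments} and the deduction of Theorem~\ref{thm:ref1} from Theorem~\ref{thm:ref2}.

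The gap is that Theorem~\ref{thm:ref2}, which carries all the content, is only stated as a goal. Two things are missing.
\begin{itemize}
\item The size comparison: $|R_J|\le|C_J|$ for $J\le(p+1)/2$ and $|C_J|\le|R_J|$ for $J\ge(p+1)/2$. This is necessary for the claimed surjectivity, resp.\ injectivity, and it fixes which square minor to take. ``Dimension counting should show'' is not an argument. The paper builds explicit injections from $(i,\lambda)\mapsto|pi-\lambda d|$, using $p\nmid d$ and $i,i'<d/2$.
\item Decisively, the non-vanishing of a maximal minor of $M_J=(b_{\lambda,pi-i'})$.
\end{itemize}

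Your concrete suggestion for the minor fails. For rows with $\lambda=1$ the entries are $b_{1,pi-i'}=a_{pi-i'}$. Under $f=x^d+\epsilon x^{d-1}$, such an entry survives only where $pi-i'\in\{d-1,d\}$. Take the paper's example $p=5$, $d=18$, $J=3$. The row $i=3$ has $pi-i'\in\{14,13,12,11,9,8\}$, so it becomes identically zero and $M_3$ loses full rank for every $\epsilon$. Any workable specialization must keep many individual $a_n$ non-zero.

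The monomial-order idea points the right way, but the mechanism is not triangularity. The paper uses the lex order with $a_d>a_{d-1}>\cdots>a_0$ and three combinatorial facts:
\begin{itemize}
\item Every non-zero entry satisfies $(\lambda-1)d<pi-i'\le\lambda d$ (Proposition~\ref{prop:M-props}(2)). So its leading monomial is $a_{\phi}a_d^{\lambda-1}$ with non-zero coefficient, where $\phi=pi-i'-(\lambda-1)d\in\{1,\dots,d\}$.
\item $\phi$ takes distinct values along each row and along each column; the column statement uses $p\nmid d$ and $0<i<d$.
\item The zero entries of a row all lie to the left of an entry with $\phi=d$ in that row. This dictates the minor: the rightmost $|R_J|$ columns, resp.\ the top $|C_J|$ rows.
\end{itemize}
A greedy matching in decreasing $\phi$ then gives a unique bijection $\sigma_0$ whose term has strictly largest leading monomial, so it cannot cancel. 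The matched entries need not be row-maximal: in the example, row $3$ is matched at $\phi=12$ although it contains $\phi=14$. So this is a unique-maximal-permutation argument, not a triangular one. Until you supply an argument of this kind, and the cardinality comparison, the proposal does not prove the statement.
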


\begin{proof}[Proof that Theorem~\ref{thm:ref1} implies
  Theorem~\ref{thm:main}]
  Part (2) of Theorem~\ref{thm:ref1} says that the kernel of $\CC$ on
  $H$ is contained in $H^{\le(p-1)/2}$.  Using
  Definitions~\ref{def:omega} and \ref{def:HJ}, we have
  \[\dim \left(H^{\le(p-1)/2}\right)=
    \sum_{\ell=1}^{(p-1)/2}\left\lfloor\frac{(p-\ell)d}{p}\right\rfloor.\]
  The case $J=(p-1)/2$ of part (1) of Theorem~\ref{thm:ref1} implies
  that the dimension of the image (i.e., the codimension of the kernel)
  of $\CC$ on $H^{\le(p-1)/2}$ is
  \[\dim \left(\bigoplus_{\ell=1}^{(p-1)/2} H(\ell,(p-1)/2)\right).\]
  The summands on the right have dimensions
  $\left\lfloor\frac{(p^2+1-2\ell p)d}{2p^2}\right\rfloor$, so the
  dimension of the kernel of $\CC$ is
\[\sum_{\ell=1}^{(p-1)/2}
    \left\lfloor\frac{(p-\ell)d}{p}\right\rfloor
    -\left\lfloor\frac{(p^2+1-2\ell p)d}{2p^2}\right\rfloor,\]
as desired.
\end{proof}

We continue by refining the statement of Theorem~\ref{thm:ref1}.

\begin{prop}\label{prop:containments}
  For $0<J<p$ and  $a\in A(k)$, we have
  \begin{enumerate}
  \item
\[    \CC\left(\HJ\right)\subseteq\CC\left(\HlJ\right)\subseteq
    \bigoplus_{\ell=1}^J H(\ell,J).\]
  \item
\[    \CC\left(\prH\right)\subseteq
    \bigoplus_{\ell=1}^{J-1} H(\ell,J).\]
\item The map $\pH\labeledto{\CC}\CC(\pH)\into\CC(\HJ)$ composed with
  the inclusions in (1) and the projection onto the factor $\ell=J$ 
  induces an isomorphism
  \[\pH\isoto H(J,J).\]
\end{enumerate}
\end{prop}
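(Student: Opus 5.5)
The plan is to compute $\CC(\omega_{i,J})$ explicitly in the basis $\{\omega_{i',\ell}\}$. All three parts then follow by bookkeeping on the indices.

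First I expand $y^{J-1}$ using the equation of $Y$. Since $y=y^p-f$, the binomial theorem gives
\[\omega_{i,J}=y^{J-1}x^{i-1}\,dx=\sum_{m=0}^{J-1}\binom{J-1}{m}\,y^{pm}\,(-f)^{J-1-m}x^{i-1}\,dx .\]
The factor $y^{pm}$ is a $p$-th power, so $p^{-1}$-linearity gives
\[\CC\bigl(y^{pm}g(x)\,dx\bigr)=y^m\,\CC\bigl(g(x)\,dx\bigr).\]
For a polynomial $g=\sum c_r x^r$, the definition of $\CC$ with respect to $x$ gives
\[\CC\bigl(g\,dx\bigr)=\sum_n c_{pn+p-1}^{1/p}x^n\,dx .\]
Hence $\CC(\omega_{i,J})$ is a linear combination of the differentials $y^m x^n\,dx=\omega_{n+1,m+1}$. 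Writing $\ell=m+1\in\{1,\dots,J\}$ and $i'=n+1$, we have $pn+p-1\le d(J-\ell)+i-1$, since the degree of $(-f)^{J-\ell}x^{i-1}$ is $d(J-\ell)+i-1$. This gives $i'\le (d(J-\ell)+i)/p$.

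For part (1), I combine this bound with $i<(p-J)d/p$ to get
\[i'<\frac{d(J-\ell)+(p-J)d/p}{p}=\frac{(p(J+1-\ell)-J)d}{p^2}.\]
The right-hand side is never an integer: $p\nmid d$, and $p(J+1-\ell)-J$ is prime to $p$ because $p\nmid J$. So this strict inequality is exactly the range of indices cut out in Definition~\ref{def:H(l,e)}. I will read that condition as $i'<\frac{(p(e+1-\ell)-e)d}{p^2}$, which is consistent with the dimension count used in the proof that Theorem~\ref{thm:ref1} implies Theorem~\ref{thm:main}. Aligning this bound with the floor convention of that definition is the one place where care is needed; the rest is routine.

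Each term at level $\ell$ therefore lies in $H(\ell,J)$. This proves $\CC(\HJ)\subseteq\bigoplus_{\ell\le J}H(\ell,J)$. Since $H(\ell,J')\subseteq H(\ell,J)$ for $J'\le J$ (the bound increases with $e$), summing over $\J\le J$ also gives the statement for $\HlJ$. The first containment $\CC(\HJ)\subseteq\CC(\HlJ)$ is trivial because $\HJ\subseteq\HlJ$.

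For parts (2) and (3), I isolate the level-$J$ component, which is the term $m=J-1$, namely $\CC(y^{p(J-1)}x^{i-1}dx)=y^{J-1}\CC(x^{i-1}dx)$. This vanishes unless $p\mid i$, which gives (2) at once. If $i=pi''$, it equals $y^{J-1}x^{i''-1}dx=\omega_{i'',J}$ with coefficient exactly $1$. Thus the composite in (3) sends the basis element $\omega_{pi'',J}$ of $\pH$ to $\omega_{i'',J}$.

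As $i''$ ranges over $0<i''<(p-J)d/p^2$, these images are precisely the basis elements of $H(J,J)$. So the composite map, although only $p^{-1}$-linear, carries a basis bijectively onto a basis. It is therefore an additive bijection, and an isomorphism in the semilinear sense needed for the dimension count.
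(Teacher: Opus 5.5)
Your proposal is correct and follows the paper's proof essentially step for step. Like the paper, you expand $y^{J-1}=(y^p-f)^{J-1}$, pull the $p$-th powers $y^{p(\ell-1)}$ out of $\CC$ as $y^{\ell-1}$, bound the degree of $f^{J-\ell}x^{i-1}$ to land in $H(\ell,J)$, and isolate the $\ell=J$ term $y^{J-1}\CC(x^{i-1}\,dx)$ for parts (2) and (3). Your reading of Definition~\ref{def:H(l,e)} as $0<i<\frac{(p(e+1-\ell)-e)d}{p^2}$ (equivalently $0<i\le\lfloor\cdot\rfloor$) is the one the paper itself uses, e.g.\ when it takes $0<i\le I$ as a basis of $H(J,J)$ and in the dimension count for Theorem~\ref{thm:main}.
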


\begin{proof}
  Let $\omega_{i,j}$ be a basis element of $\HlJ$, so $j\le J$ and
  $0<i<(p-j)d/p$.   Then we have
  \begin{align}
    \CC(\omega_{i,j})&=\CC\left(y^{j-1}x^{i-1}\,dx\right)\notag\\
                     &=\CC\left((y^p-f)^{j-1}x^{i-1}\,dx\right)\notag\\
                     &=\sum_{\ell=1}^j\binom{j-1}{\ell-1}(-1)^{j-\ell}
                       \CC\left(y^{p(\ell-1)}f^{j-\ell}x^{i-1}\,dx\right)\notag\\
                     &=\sum_{\ell=1}^j\binom{j-1}{\ell-1}(-1)^{j-\ell}
                       y^{\ell-1}\CC\left(f^{j-\ell}x^{i-1}\,dx\right).\label{eq:CC}
  \end{align}
  The $\ell$-th term of the sum
  lies in $H^{\ell}$ and since the degree of $f^{j-\ell}x^{i}$ is at
  most $(j-\ell)d+(p-j)d/p$, we have that
  $\CC\left(f^{j-\ell}x^{i-1}\,dx\right)$ is $g(x)x^{-1}\,dx$ where
  $g$ has degree at most
\[\frac{(j-\ell)d+(p-j)d/p}{p}=\frac{(p(j+1-\ell)-j)d}{p^2},\]
so the $\ell$-th term of the sum
lies in $H(\ell,j)\subseteq H(\ell,J)$.  This completes the proof of
part (1).

For part (2), note that the $\ell=j$ term of the sum
in equation~\eqref{eq:CC} is $y^{j-1}\CC\left(x^{i-1}\,dx\right)$, and
this is zero if $p\nodiv i$.  Thus if $\omega_{i,j}\in \prH$, the part
of $\CC(\omega_{i,j})$ in the $J$-th factor of the sum in part (1)
vanishes.  This establishes part (2).

  Part (3) follows by similar reasoning:  if $p|i$,
  \[\CC\left(y^{j-1}x^{i-1}\,dx\right)=y^{j-1}x^{i/p-1}\,dx+\cdots\]
  where the omitted terms involve lower powers of $y$.  Thus the map
  under consideration sends $\omega_{i,j}$ to $\omega_{i/p,j}$.  Let
  \[I=\left\lfloor\frac{(p-J)d}{p^2}\right\rfloor
    =\left\lfloor\frac1p\left\lfloor\frac{(p-J)d}{p}\right\rfloor\right\rfloor.\]
  Since the differentials $y^{j-1}x^{i-1}\,dx$
  for $j=J$ and $0<i\le I$ form a basis of $H(J,J)$, it
  follows that as  $\omega_{i,j}$ runs through the basis 
  \[\{\omega_{p,j},\omega_{2p,j},\dots,\omega_{pI,j}\}\]
  of $\pH$, the projection into $H(J,J)$ of the differentials
  $\CC(\omega_{i,j})$ runs through a basis of $H(J,J)$. 
  This establishes the claim in part (3).
\end{proof}

\begin{defn}\label{def:Phi}
For $1< J<p$, define
a map $\Phi_J$ as the composition
\[
\Phi_J: \quad    \prH\labeledlongto{\CC}
\bigoplus_{\ell=1}^{J-1} H(\ell,J)
    \longrightarrow\bigoplus_{\ell=1}^{J-1}
    \frac{H(\ell,J)}{H(\ell,J-1)},
\]
where the second map is the natural projection.  
\end{defn}

(Removing the ``top layer'' $\pH\to H(J,J)$ from the map in part (1)
of Proposition~\ref{prop:containments} in the definition of $\Phi_J$
is not strictly necessary, but it streamlines the analysis of a
certain determinant in the last part of the proof of
Theorem~\ref{thm:ref2}.)

  We refine Theorem~\ref{thm:ref1} as follows.

  \begin{thm}\label{thm:ref2}
    There is a non-empty Zariski open subset $U\subseteq A$ such that if
    $a\in U(k)$ and $Y=Y_a$ then:
    \begin{enumerate}
    \item     If $1< J\le (p+1)/2$, then $\Phi_J$ is surjective.
     \item If $(p+1)/2\le J<p$, then $\Phi_J$ is injective.
    \end{enumerate}
  \end{thm}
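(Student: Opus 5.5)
Fix $J$ with $1<J<p$ and make $\Phi_J$ explicit in the monomial bases: $\omega_{i,J}$ with $p\nmid i$, $0<i<(p-J)d/p$, for $\prH$, and $\omega_{i,\ell}$ with $\lfloor(p(J-\ell)-(J-1))d/p^2\rfloor\le i<\lfloor(p(J+1-\ell)-J)d/p^2\rfloor$ (suitably normalized at the endpoints) for the quotients $H(\ell,J)/H(\ell,J-1)$. By equation~\eqref{eq:CC} the $\ell$-component of $\CC(\omega_{i,J})$ is $\binom{J-1}{\ell-1}(-1)^{J-\ell}y^{\ell-1}\CC(f^{J-\ell}x^{i-1}dx)$. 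The coefficient of $x^{m-1}dx$ in $\CC(f^{J-\ell}x^{i-1}dx)$ is the $p$-th root of the coefficient of $x^{pm-i}$ in $f^{J-\ell}$, a polynomial in $a_0,\dots,a_d$. Since $\binom{J-1}{\ell-1}\ne0$ mod $p$, the binomial factors can be dropped. Replacing $k$ by $\kbar$ and $\CC$ by its $p$-linear inverse-transpose (or simply raising entries to the $p$-th power), $\Phi_J$ is represented by a matrix $M_J$ whose entries are the polynomials $c_{pm-i}(f^{J-\ell})$. Surjectivity or injectivity of $\Phi_J$ then says that some maximal minor of $M_J$ is nonzero. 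I would first check, by counting lattice points in the two regions of Figure~\ref{fig:I}, that the number of columns $\#\{i: p\nmid i\}$ is at least the number of rows when $J\le(p+1)/2$ and at most it when $J\ge(p+1)/2$. The counting uses $\lfloor\cdot\rfloor$ identities together with the dimension formula behind Theorem~\ref{thm:ref1}. The open set $U$ is then the intersection over $J$ of the loci where a chosen maximal minor $D_J$ is nonzero, so it suffices to show each $D_J$ is a nonzero polynomial in the $a$'s.

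To show $D_J\neq0$ I would specialize or degenerate rather than compute the determinant. The first attempt is to find a monomial in $a_0,\dots,a_d$ that appears in exactly one term of the Leibniz expansion. Put a weight/term order on monomials with $a_d$ and $a_{d-1}$ (or $a_d$ and $a_0$) dominant. The coefficient $c_n(f^r)$ has a leading term of the form $a_d^{r-s}a_{t}^{s}\cdots$ determined by $n$ and $r$. With a suitable order the leading term of $D_J$ would come from a unique bijection between rows and columns, namely the one matching $(\ell,m)$ with the $i$ nearest to $pm-(J-\ell)d$. Equivalently, we can specialize $f$ to a two-term polynomial $x^d+tx^{e}$ and look at lowest order in $t$, or use $f=x^d+x$, where $c_n(f^r)$ is a single binomial coefficient and the matrix becomes sparse with a combinatorial structure.

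The main obstacle is this nonvanishing step. Rows from different levels $\ell$ compete for the same columns $i$, and one must show the chosen matching is the unique optimum. One must also check that the resulting leading coefficient, a product of binomial coefficients $\binom{r}{s}$ with $r=J-\ell<p$, is nonzero mod $p$. Here the restriction $J\le(p+1)/2$ versus $J\ge(p+1)/2$ should enter precisely through which side, rows or columns, has the surplus and hence which minor is square. I would first verify small cases ($p=5,7$ with $d$ small, as in Figure~\ref{fig:I}) to guess the correct matching, then prove uniqueness by an exchange argument on the weights. The removal of the top layer $\pH\to H(J,J)$ in Definition~\ref{def:Phi} makes the matrix block-triangular in $\ell$ after ordering, which should make the exchange argument feasible.
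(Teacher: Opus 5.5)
\textbf{There is a genuine gap: the nonvanishing of the minor is never proved.} Your set-up agrees with the paper's. After discarding the binomial factors and raising entries to the $p$-th power, $\Phi_J$ is represented by the matrix $M_J$ with entries $b_{\lambda,pi-i'}$, the coefficient of $x^{pi-i'}$ in $f^{\lambda}$ with $\lambda=J-\ell$. It then suffices to exhibit a maximal minor whose leading monomial, for some term order, comes from a single permutation. But the step you flag as the main obstacle is the whole proof, and the ingredients you propose for it do not hold up.

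\begin{enumerate}
\item The matching ``row $(i,\lambda)$ goes to the column nearest $pi-\lambda d$'' is not a bijection. In the paper's example $p=5$, $d=18$, $J=3$, the rows $i=3,6,7$ have $pi-\lambda d=-3,-6,-1$, and all three want column $i'=1$.
\item The assertion that removing the top layer makes $M_J$ block-triangular in $\ell$ is false. Every row, whatever its level, is indexed against the single column set ${}'H^J$. In that example, column $1$ carries the nonzero entries $b_{1,14}$, $b_{2,29}$, $b_{2,34}$ from both levels.
\item The specializations are not equivalent to the generic claim, and can fail outright. For $f=x^{18}+x$, the row $i=3$ of that example consists of $b_{1,n}$ with $n\in\{8,9,11,12,13,14\}$, all zero, so the (square) matrix is singular.
\item The inequality between the numbers of rows and columns needs an independent proof, since Theorem~\ref{thm:ref1} is deduced from the statement at hand. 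The paper obtains it from the explicit injections $(i,\lambda)\mapsto|pi-\lambda d|$ and its reverse, using that the relevant indices are $<d/2$.
\end{enumerate}

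\textbf{What is missing.}
\begin{enumerate}
\item Every nonzero entry satisfies $(\lambda-1)d<pi-i'\le\lambda d$. So in the lexicographic order with $a_d>a_{d-1}>\cdots>a_0$, its leading monomial is exactly $a_d^{\lambda-1}a_{\phi(i,i')}$, where $\phi(i,i')=pi-i'-(\lambda-1)d\in\{1,\dots,d\}$. Its coefficient is $\lambda$ or $1$, hence nonzero in $k$.
\item $\phi$ is injective along each row, trivially. It is also injective along each column, because $\phi\equiv pi-i'\pmod d$, $p\nmid d$, and $0<i<d$.
\item The minor must be chosen so that each zero entry has the entry $a_d^{\lambda}$ of its row inside the minor; zero entries always lie to the left of that entry. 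The paper takes the rightmost $|R_J|$ columns when $J\le(p+1)/2$, and otherwise the top $|C_J|$ rows with all columns.
\end{enumerate}

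With these, a greedy matching works: first pair the rows and columns with $\phi=d$, then, among those remaining, the ones with $\phi=d-1$, and so on. This produces a bijection $\sigma_0$. By (3), all entries in the rows left after the first step are nonzero, and by (2), no two rows ever claim the same column. Now take any other permutation $\sigma$ with nonzero product, and let $\ell$ be the largest value at which $\sigma$ departs from $\sigma_0$ on the rows matched at step $\ell$. The exponents of $a_{\ell+1},\dots,a_d$ in the two leading monomials agree, while the exponent of $a_\ell$ is strictly smaller for $\sigma$. Hence $LM(Pr(\sigma_0))$ survives in $\det N$, and no exchange argument or guessing from small cases is needed.
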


  We will prove Theorem~\ref{thm:ref2} in the next section.

  \begin{proof}[Proof that Theorem~\ref{thm:ref2} implies
    Theorem~\ref{thm:ref1}]
    Assume Theorem~\ref{thm:ref2} and fix $a\in U(k)$.  We first prove by
    induction that for $J\le(p+1)/2$, 
    \begin{equation}\label{eq:image}
    \CC\left(H^{\le J}\right)=\bigoplus_{\ell=1}^{J} H(\ell,J).      
    \end{equation}
    The base case $J=1$ follows from (2) and (3) of
    Proposition~\ref{prop:containments}.  Now assume $1<J\le(p+1)/2$ and that
    \eqref{eq:image} holds for $J-1$.  By
    Proposition~\ref{prop:containments}, part (3), $\pH$ maps onto $H(J,J)$,
    and by Theorem~\ref{thm:ref2}, part (1), ${}'H^{J}$ maps onto
    \[\bigoplus_{\ell=1}^{J-1} \frac{H(\ell,J)}{H(\ell,J-1)}.\]
  By the induction hypothesis,
\[  \CC\left(H^{\le J-1}\right)=\bigoplus_{\ell=1}^{J-1} H(\ell,J-1),\]
  so we conclude that \eqref{eq:image} holds for $J$ as well.  This
  proves  part (1) of Theorem~\ref{thm:ref1}.
  
  To prove part (2) of Theorem~\ref{thm:ref1}, note that if
  $J\ge(p+1)/2$, 
  Proposition~\ref{prop:containments} and part (2) of
  Theorem~\ref{thm:ref2} together show that $\CC$ defines an injection
  \begin{equation}\label{eq:inj}
  H^{J}\into\bigoplus_{\ell=1}^{J}
    \frac{H(\ell,J)}{H(\ell,J-1)}.
  \end{equation}
  (In the $\ell=J$ summand,
  the denominator $H(J,J-1)$ is zero.)  Now assume that $\omega\in H$
  has $\CC(\omega)=0$.  Write $\omega=\omega_{p-1}+\omega'$ where
    \[\omega_{p-1}\in H^{p-1}\and \omega'\in H^{\le p-2}.\]
    By Proposition~\ref{prop:containments}, $\CC(\omega')$ vanishes in
    the right hand side of \eqref{eq:inj} for $J=p-1$ and therefore
    so does $\CC(\omega_{p-1})$.  It then follows from the injectivity
    of \eqref{eq:inj} for $J=p-1$
    that $\omega_{p-1}=0$, i.e., that
    $\omega\in H^{\le p-2}$.  Repeating the argument using
    \eqref{eq:inj} for $J=p-2, p-3,\dots,(p+1)/2$
    shows that $\omega\in H^{(p-1)/2}$, and this is exactly the claim
    in part (2) of Theorem~\ref{thm:ref1}.  This completes the
    reduction of Theorem~\ref{thm:ref1} to Theorem~\ref{thm:ref2}.
  \end{proof}

\section{Proof of Theorem~\ref{thm:ref2}}
To prove Theorem~\ref{thm:ref2}, we will write down the matrix of
$\Phi_J$ in suitable coordinates and argue that this matrix has full
rank for generic choices of $a\in A(k)$.  More precisely, for each
value of $J$, we will find a non-empty Zariski open subset
$U_J\subseteq A$ on which the assertion of Theorem~\ref{thm:ref2} for
$J$ holds.  Since $A$ is irreducible, the intersection of the $U_J$ is
a non-empty open subset $U$ of $A$ on which all of
Theorem~\ref{thm:ref2} holds.

First we write down sets
indexing bases of the domain and range of
$\Phi_J$.

\begin{defns}\label{def:C+R}
For $1<J<p$, set  
\[C_J:=\left\{i'\left| 0<i'<\frac{(p-J)d}{p}, p\nodiv
      i'\right.\right\},\]
and
\[R_J=\left\{(i,\lambda)\left|\, 0<\lambda<J\text{ \ and \
      }\frac{(p\lambda-J+1)d}{p^2}
      <i<\frac{((p(\lambda+1)-J))d}{p^2}\right.\right\}.\]
\end{defns}

We order $C_J$ by the archimedean size of $i'$ and $R_J$ by the
archimedean size of $i$.  Note that the intervals in the definition of
$R_J$ for varying $\lambda$ do not overlap, so if
$(i,\lambda)\in R_J$, then $\lambda$ is uniquely determined by $i$.
Thus we sometimes write ``for $i\in R_J$''.

Clearly, $\{\omega_{i',J}|i'\in C_J\}$ is a basis of $\prH$.  Using
the definitions of $\Phi_J$ (Definition~\ref{def:Phi}) and $H(\ell,J)$
(Definition~\ref{def:H(l,e)}) and setting $\lambda=J-\ell$ in the
definition of $R_J$, we see that
$\{\omega_{i,J-\lambda}|(i,\lambda)\in R_J\}$ is a basis for the range
of $\Phi_J$.

Next we compare the sizes of the index sets $C_J$ and $R_J$.

\begin{prop}\label{prop:comp}\mbox{}
  \begin{enumerate}
  \item If $1< J\le(p+1)/2$, $|R_J|\le |C_J|$.
  \item If $(p+1)/2\le J<p$, $|C_J|\le |R_J|$.
  \end{enumerate}
\end{prop}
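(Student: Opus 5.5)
The plan is to compute both cardinalities exactly as sums of floors, and then to compare them using the heuristic
\[
|C_J|\approx\frac{(p-J)(p-1)d}{p^2}
\qquad\text{versus}\qquad
|R_J|\approx\frac{(J-1)(p-1)d}{p^2}.
\]
The comparison of $p-J$ with $J-1$ flips exactly at $J=(p+1)/2$, which explains both parts of the statement. The work is in controlling the floor errors.

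\emph{Step 1: count $C_J$.} Since $p\nodiv d$ and $0<J<p$, the number $(p-J)d/p$ is not an integer. Hence the number of integers in $(0,(p-J)d/p)$ is $\lfloor (p-J)d/p\rfloor$. Among these, the multiples of $p$ are the $pm$ with $0<m<(p-J)d/p^2$, and there are $\lfloor (p-J)d/p^2\rfloor$ of them. Therefore
\[
|C_J|=\left\lfloor\frac{(p-J)d}{p}\right\rfloor-\left\lfloor\frac{(p-J)d}{p^2}\right\rfloor .
\]

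\emph{Step 2: count $R_J$.} For $1\le\lambda\le J-1$, the endpoints $(p\lambda-J+1)d/p^2$ and $(p\lambda+p-J)d/p^2$ are never integers. Indeed, $p\nodiv d$, and $p\lambda-J+1$ and $p\lambda+p-J$ are prime to $p$ because $1<J<p$. So the $\lambda$-th interval contributes
\[
\left\lfloor\frac{(p\lambda+p-J)d}{p^2}\right\rfloor-\left\lfloor\frac{(p\lambda-J+1)d}{p^2}\right\rfloor .
\]
It is cleaner to describe $R_J$ intrinsically. Set $t=p^2i/d$. Then $i\in R_J$ if and only if $p-J+1<t<pJ-J$ and $t$ does not lie in any of the $J-2$ unit windows $[p\lambda+p-J,\,p\lambda+p-J+1]$. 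Equivalently, since $t$ is not an integer except possibly when $d\mid i$ (a case to check separately), $\lfloor t\rfloor\not\equiv -J\pmod p$. In the same language, $C_J$ is the set of $i'$ with $p\,i'/d<p-J$ and $p\nodiv i'$.

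\emph{Step 3: compare.} Summing the Step 2 expression and substituting the crude bound $\lfloor x\rfloor-\lfloor y\rfloor\in\{\lceil x-y\rceil-1,\lfloor x-y\rfloor+1\}$ term by term costs an error of up to one per term, which is too lossy. I would therefore first try a direct injection.
\begin{itemize}
\item For $J\le (p+1)/2$, define an injection $R_J\hookrightarrow C_J$ that sends the integers in the $\lambda$-th window of $R_J$ to integers in a suitable block of $C_J$. For example, use the $(p-1)$-to-$p$ compression $i'\mapsto i'-\lfloor i'/p\rfloor$, which identifies $C_J$ with $\{1,\dots,|C_J|\}$, together with the observation that the $\lambda$-th interval of $R_J$ has length $(p-1)d/p^2$. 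The resulting inequality should follow from a superadditivity relation of the form $\sum_\lambda\lfloor x_\lambda\rfloor\le\lfloor\sum_\lambda x_\lambda\rfloor$, applied to the fractional parts of the multiples of $d/p^2$ involved.
\item For part~(2), I would exploit the symmetry $J\leftrightarrow p+1-J$, which swaps $J-1$ and $p-J$. The aim is to show that $|R_J|$ and $|C_{p+1-J}|$ agree, or differ by a controlled amount, so that part~(2) follows from part~(1) applied to $p+1-J$. This needs verification. If the symmetry fails, I would redo the injection argument in the reverse direction, $C_J\hookrightarrow R_J$.
\end{itemize}

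\emph{Main obstacle.} The main difficulty is exactly this floor bookkeeping. The heuristic difference is $(p-1)\,|p+1-2J|\,d/p^2$, which can be smaller than the accumulated rounding error when $d$ is small. So an exact argument using the specific residues of $d$ modulo $p^2$ is needed, rather than length estimates. If the injection approach stalls, the fallback is to write $d=p^2q+r$ with $0<r<p^2$ and $p\nodiv r$. The $q$-part then contributes exactly $(p-J)(p-1)q$ and $(J-1)(p-1)q$ respectively, reducing the claim to a finite check of the inequality for the remainder $r$ in place of $d$. That check I would carry out by the residue description of Step~2.
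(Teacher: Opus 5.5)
Your proposal has a genuine gap: neither inequality is actually proved. Steps~1 and~2 are correct. That covers the formula for $|C_J|$, the floor-difference formula for $|R_J|$, and the description via $t=p^2i/d$. You are also right that length estimates cannot settle the comparison. But Step~3 only lists strategies, and each of them fails or stalls.

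\emph{Superadditivity.} This cannot work as stated, because a window of $R_J$ can contain $\lfloor x_\lambda\rfloor+1$ integers rather than $\lfloor x_\lambda\rfloor$. Take $p=7$, $d=3$, $J=3$. Both windows have length $18/49<1$, yet $(i,\lambda)=(1,2)\in R_3$, and $|C_3|=1$. Bounding each window by its length alone therefore gives only $|R_3|\le 2$. The inequality $|R_3|\le|C_3|$ is tight here, so it really depends on the arithmetic fact that the windows cannot both be occupied.

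\emph{Symmetry.} The exact form $|R_J|=|C_{p+1-J}|$ is false: in the same example $|R_3|=1$ but $C_5=\emptyset$. What does hold is $|R_J|+|R_{p+1-J}|=|C_J|+|C_{p+1-J}|$. This follows by splitting the set of $(i,\lambda)$ with $-(J-1)d/p<pi-\lambda d<(p-J)d/p$ according to $\lambda$, and it would reduce (2) to (1). However, you did not establish it, and (1) would still be unproved.

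\emph{Fallback.} Writing $d=p^2q+r$ is a valid reduction to the case $d<p^2$. It is not a finite check, though, since $p$ is arbitrary, so the same problem remains.

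The missing idea is to read the defining condition of $R_J$ as a condition on the residue of $pi$ modulo $d$, not on $p^2i/d$. Indeed, $(i,\lambda)\in R_J$ if and only if $-(J-1)d/p<pi-\lambda d<(p-J)d/p$.

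\emph{Part~(1).} For $J\le(p+1)/2$ one has $J-1\le p-J$ and $i<d/2$. Hence the map $(i,\lambda)\mapsto|pi-\lambda d|$ lands in $C_J$; its value is nonzero and prime to $p$ because $\lambda d$ is prime to $p$. It is injective: if two pairs have the same image, then $pi_1\equiv\pm pi_2\pmod d$. Since $p$ is invertible mod $d$, this gives $i_1\equiv\pm i_2\pmod d$, and with $0<i_1,i_2<d/2$ this forces $i_1=i_2$.

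\emph{Part~(2).} For $J\ge(p+1)/2$ the paper reverses the map. Given $i'\in C_J$, so that $i'<d/2$, solve $p\iota\equiv i'\pmod d$ with $0<\iota<d$, and put $\rho=(p\iota-i')/d\in(0,p)$. Send $i'$ to $(\iota,\rho)$ if $\rho<J$, and to $(d-\iota,p-\rho)$ otherwise. Injectivity again comes from $i'_1\equiv\pm i'_2\pmod d$ with both values less than $d/2$.

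These explicit injections make all of the floor bookkeeping unnecessary.
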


\begin{proof}
  (1), Supposing $1<J\le(p+1)/2$, we write down an injective map of sets
  $R_J\into C_J$. If $(i,\lambda)\in R_J$, then from the definition of  $R_J$,
  \[i<\frac{((p(\lambda+1)-J))d}{p^2}\le\frac{(pJ-J)d}{p^2}
    \le\frac{(p^2-1)d}{2p^2}<d/2.\]
  Rewriting the inequalities defining $R_J$, we have
  \[-\frac{(J-1)d}p<pi-\lambda d<\frac{(p-J)d}p,\]
  and since $J\le(p+1)/2$, it follows that
  the lower bound is $\ge -(p-J)d/p$.  Note that $0<\lambda<p$, so
  $\lambda$ is prime to $p$.  Let $i'=\pm(pi-\lambda d)$; this is
  non-zero, and we choose the sign so that $i'>0$.  Then we have that
  $0<i'<(p-J)d/p$ and $p\nodiv i'$,
  so $i'\in C_J$.  This defines a map
  $R_J\to C_J$.  If both $i_1$ and $i_2$ map to $i'$, then
  $i_1\equiv\pm i_2\pmod d$.  But $i_1$ and $i_2$ are $<d/2$, so this
  is possible only if $i_1=i_2$.  Thus the map $R_J\to C_J$ is
  injective.

  (2) Supposing $(p+1)/2\le J<p$, we write down an injective map of
  sets $C_J\into R_J$.  Choose $i'\in C_J$ and note that $i'<d/2$.
  Let $\iota$ be the unique integer in $(0,d)$ with
  $p\iota\equiv i'\pmod d$, and let $\rho=(p\iota-i')/d$.  By the
  definition of $C_J$, $p\nodiv i'$, so we have $0<\rho$, and since
  $\iota<d$, we have $\rho<p$.  If $\rho<J$ we claim that
  $(i=\iota,\lambda=\rho)$ is an element of $R_J$.  Indeed, we have
  \[ 0<pi-\lambda d=i'<\frac{(p-J)d}{p},\]
  and $0<\lambda<J$ as required.  If $\rho\ge J$, we claim that
  $(i=d-\iota,\lambda=p-\rho)$ is an element of $R_J$.  Indeed,
  \[-\frac{(J-1)d}{p}\le-\frac{(p-J)d}{p}<-i'=pi-\lambda d<0,\] and
  $0<\lambda<J$ as required.  Thus we have defined a map $C_J\to R_J$.
  If $i'_1$ and $i'_2$ both map to $i$, then
  $i_1'\equiv\pm i_2'\pmod d$, and since $i_1'$ and $i_2'$ are $<d/2$,
  this is only possible if $i'_1=i'_2$.  Thus the map $C_J\to R_J$ is
  injective. This completes the proof of the Proposition.
\end{proof}

Now take $a\in A(k)$, let $f=f_a$, and let $b_{\lambda,n}$ be the
coefficient of $x^n$ in $f^{\lambda}$.  We recall the calculation of
$\CC(\omega_{i,J})$ in the proof of
Proposition~\ref{prop:containments}: for $1<J<p$ and $i'\in C_J$,
we have
\begin{align*}
\CC(\omega_{i',J})
  &=\sum_{\ell=1}^J\binom{J-1}{\ell-1}(-1)^{J-\ell}
    y^{\ell-1}\CC\left(f^{J-\ell}x^{i'-1}\,dx\right)\\
  &=\sum_{\lambda=0}^{J-1}\binom{J-1}{\lambda}(-1)^\lambda
    y^{J-\lambda-1}\CC\left(f^\lambda x^{i'-1}\,dx\right),
\end{align*}
where the second expression is the result of substituting
$\ell=J-\lambda$.  Writing this in terms of the basis elements in
$R_J$, we see that the coefficient of $\omega_{i,\lambda}$ in
$\CC(\omega_{i',J})$ is
\[\binom{J-1}{\lambda}(-1)^\lambda b_{\lambda,pi-i'}^{1/p}.\] 
Thus to prove Theorem~\ref{thm:ref2}, we need to show that for each
$1<J<p$, the matrix 
\begin{equation}\label{eq:firstM}
  \left(\binom{J-1}{\lambda}(-1)^\lambda
  b_{\lambda,pi-i'}^{1/p}\right)_{\substack{(i,\lambda)\in R_J\\ i'\in C_J}}  
\end{equation}
has maximal rank.

Now $J$ is fixed, so the binomial coefficient and the sign of the
entry at $((i,\lambda),i')$ depend only on the row index
$(i,\lambda)$.  We may thus divide each element of row $(i,\lambda)$
by the non-zero quantity
\[\binom{J-1}{\lambda}(-1)^\lambda\]
without changing the rank of the matrix.  We may also raise every
element in the matrix to the $p$-th power without changing the rank.
Thus, to prove Theorem~\ref{thm:ref2}, we must show that for generic
$a$, the matrix
\begin{equation}\label{eq:M-special}
\Bigl(b_{\lambda,pi-i'}\Bigr)_{\substack{(i,\lambda)\in R_J\\ i'\in C_J}}  
\end{equation}
has maximal rank.

We change point of view and make calculations in the polynomial ring
\[S=k[a_0,\dots,a_d].\]
(We have delayed doing so until now due to the exponent $1/p$ in
equation~\eqref{eq:firstM}.)  Let $f$ be the generic polynomial of
degree $d$:
\[f=a_0+a_1x+\cdots+a_dx^d\in S[x],\]
and for $\lambda<p$ and $n\ge0$, let $b_{\lambda,n}$ be the coefficient of
$x^n$ in $f^\lambda$, so
\[b_{\lambda,n}=\sum_{\substack{e_0,\dots,e_d\ge0\\
      e_0+e_1\cdots+e_d=\lambda
      \\0e_0+1e_1+\cdots+de_d=n}}
  \left(\frac{\lambda!}{e_0!e_1!\cdots e_d!}\right)a_0^{e_0}\cdots a_d^{e_d}.\]
Note that this is zero if and only if $n>\lambda d$.

\begin{defn}\label{def:M-generic}
Fix a $J$ with $1<J<p$ and define a matrix $M=M_J$ with rows indexed
by $R_J$, columns indexed by $C_J$, and with entries in $S$ by the
assignment
\begin{equation*}
M_{i,i'}=M_{(i,\lambda),i'}=b_{\lambda,pi-i'},
\end{equation*}
where $b_{\lambda,n}$ is the coefficient of $x^n$ in $f^\lambda$.
\end{defn}

Thus the entries of $M$ are elements of $S$, and if we assign values
in $k$ to the variables $a_0,\dots,a_m$, $M$
specializes to the matrix in equation~\eqref{eq:M-special}.  Our task
is to show that there is a non-empty Zariski open set $U_J\subseteq A$
such that for $a\in U_J(k)$, the specialization has maximal rank.

Here is the example $p=5$, $d=18$, $J=3$:
\[R_J=\{(3,2),(4,2),(5,2),(6,1),(7,1),(8,1)\},
  \qquad
  C_J=\{1,2,3,4,6,7\},\]
and
\[M_J=\begin{pmatrix}
b_{1,14}&b_{1,13}&b_{1,12}&b_{1,11}&b_{1,9}&b_{1,8}\\
0&b_{1,18}&b_{1,17}&b_{1,16}&b_{1,14}&b_{1,13}\\
0&0&0&0&0&b_{1,18}\\
b_{2,29}&b_{2,28}&b_{2,27}&b_{2,26}&b_{2,24}&b_{2,23}\\
b_{2,34}&b_{2,33}&b_{2,32}&b_{2,31}&b_{2,29}&b_{2,28}\\
0&0&0&b_{2,36}&b_{2,34}&b_{2,33}
\end{pmatrix}.
\]

To finish the proof of Theorem ~\ref{thm:ref2}, we will show that
the determinant of a suitable maximal minor of $M$ is a non-zero
element of $S$.  Then the complement of the zero locus of this
determinant is the non-empty open $U_J$ we seek.

To that end, we now establish certain properties of the matrix
$M$.

\begin{prop}\label{prop:M-props}\mbox{}
  \begin{enumerate}
  \item If $M_{i,i'}=0$ \textup{(}i.e., if $pi-i'>\lambda d$\textup{)}
    then there is an $i''\in C_J$ such that $i''>i'$ and
    $M_{i,i''}=b_{\lambda,\lambda d}=a_d^\lambda$.
    In other words, the zero entries in $M$ all occur to the left of and
    in the same row as an occurrence of $b_{\lambda,\lambda d}$.  In
    particular, no row of $M$ is zero.
  \item  For all $((i,\lambda),i')\in R_J\times C_J$, we have $pi-i'>(\lambda-1)d$.
  \end{enumerate}
\end{prop}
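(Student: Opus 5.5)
Both parts are elementary inequalities coming from the definitions of $R_J$ and $C_J$ (Definitions~\ref{def:C+R}). We also use the fact, noted after the formula for $b_{\lambda,n}$, that $b_{\lambda,n}=0$ exactly when $n>\lambda d$ (or $n<0$). For $n=\lambda d$ the only monomial is $a_d^\lambda$, with multinomial coefficient $1$, so $b_{\lambda,\lambda d}=a_d^\lambda$. We prove (2) first, since it also shows that $pi-i'\ge 0$, so every index $pi-i'$ appearing in $M$ is in range.

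For part (2), take $(i,\lambda)\in R_J$ and $i'\in C_J$. The definition of $R_J$ gives $pi>(p\lambda-J+1)d/p=\lambda d-(J-1)d/p$, and the definition of $C_J$ gives $i'<(p-J)d/p$. Hence
\[pi-i'>\lambda d-\frac{(J-1)d}{p}-\frac{(p-J)d}{p}=\lambda d-\frac{(p-1)d}{p}>(\lambda-1)d,\]
which is (2).

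For part (1), suppose $M_{i,i'}=0$, i.e.\ $pi-i'>\lambda d$. The natural candidate is $i''=pi-\lambda d$, because then $pi-i''=\lambda d$ and $M_{i,i''}=b_{\lambda,\lambda d}=a_d^\lambda$. We check that $i''\in C_J$ and $i''>i'$ (which gives $i''>0$ since $i'>0$):
\begin{itemize}
\item $i''>i'$ is exactly the assumption $pi-i'>\lambda d$.
\item The upper bound $i''<(p-J)d/p$ is the right-hand inequality $pi<(p(\lambda+1)-J)d/p$ in the definition of $R_J$, rewritten.
\item For $p\nmid i''$: we have $i''\equiv-\lambda d\pmod p$, and $p\nmid d$ while $0<\lambda<J<p$.
\end{itemize}
So $i''\in C_J$, and the zero entries in row $(i,\lambda)$ lie to the left of an entry equal to $a_d^\lambda$.

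For the final remark, every row contains a nonzero entry. If some entry of the row is zero, the previous paragraph produces an entry $a_d^\lambda\neq0$. Otherwise all entries are nonzero, and $C_J$ is nonempty because it contains $1$ whenever $R_J\neq\emptyset$; this holds since $(p-J)d/p>1$ is implied by any $i$ lying in the interval defining $R_J$. The only step requiring care is choosing $i''=pi-\lambda d$; all the rest is routine rearrangement of the inequalities.
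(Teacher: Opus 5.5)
Your proofs of parts (1) and (2) are correct and follow the paper. In (1) you make the same choice $i''=pi-\lambda d$. In (2) you use the same left-endpoint inequality from $R_J$, bounding $i'<(p-J)d/p$ directly where the paper reduces to the largest element of $C_J$.

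The error is in your last paragraph. You claim that $R_J\neq\emptyset$ forces $(p-J)d/p>1$, so that $1\in C_J$. This is false. Take $p=5$, $d=2$, $J=4$:
\begin{itemize}
\item $(1,3)\in R_4$, since $24/25<1<32/25$;
\item but $(p-J)d/p=2/5$, so $C_4=\emptyset$.
\end{itemize}
Membership $(i,\lambda)\in R_J$ only says that the nonzero integer $pi-\lambda d$ lies in $\bigl(-(J-1)d/p,\,(p-J)d/p\bigr)$. That integer can be negative (here it equals $-1$), and then you learn $(J-1)d/p>1$, not $(p-J)d/p>1$.

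In this example $M_4$ is a $1\times 0$ matrix. Its single row is empty, i.e.\ the zero vector of $k^0$. So the clause ``no row of $M$ is zero'' is not true without the hypothesis $C_J\neq\emptyset$, and no argument can establish it in general. The paper's ``in particular'' tacitly assumes this hypothesis. Under it, your dichotomy is a valid proof: either some entry of the row is zero, or all entries are nonzero.

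Nothing downstream is affected. If $C_J=\emptyset$ and $J\le(p+1)/2$, Proposition~\ref{prop:comp} forces $R_J=\emptyset$. If $J\ge(p+1)/2$, the minor $N$ is the empty $0\times 0$ matrix. You should drop the false justification and state that rows are nonzero under the hypothesis $C_J\neq\emptyset$.
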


\begin{proof}
(1) Suppose that $pi-i'>\lambda d$.
Setting $i''=pi-\lambda d$, we see that $pi-i''=\lambda d$ and 
$i''$ is $>i'$.  Also, $i''$ is prime to $p$.  Since $(i,\lambda)\in
R_J$, we have
  \[pi<\frac{(p(\lambda+1)-J)d}{p}\]
  so
  \[i''=pi-\lambda d<\frac{(p-J)d}{p},\]
  which ensures that $i''\in C_J$.

(2) It suffices to check this for the maximum value of
$i'$, namely $i'=\lfloor(p-J)d/p\rfloor$.  But $(i,\lambda)\in R_J$ implies
\begin{align*}
pi>\frac{(\lambda p-J+1)d}{p}&>(\lambda-1)d+\frac{(p-J+1)d}{p}\\
    &>(\lambda-1)d+\left\lfloor\frac{(p-J)d}{p}\right\rfloor,    
\end{align*}
which proves the claim.
\end{proof}

Let $T\subseteq R_J\times C_J$ be the subset where $pi-i'\le\lambda d$,
i.e., the set of $(i,i')$ where $M_{i,i'}\neq0$.  Define a function
$\phi:T\to\{1,\dots,d\}$ by $\phi(i,i')=pi-i'-(\lambda-1)d$.  We note
several simple but important properties of $\phi$.

\begin{prop}\label{prop:phi}
  We have:
  \begin{enumerate}
  \item $\phi(i,i')\equiv pi-i'\pmod d$
  \item The values of $\phi$ for a fixed $i$ and varying $i'$ are
    distinct.
  \item The values of $\phi$ for a fixed $i'$ and varying $i$ are also
    distinct.
  \end{enumerate}
\end{prop}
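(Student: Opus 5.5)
The three claims are elementary, and I would prove them directly from the definition $\phi(i,i')=pi-i'-(\lambda-1)d$ on $T$, together with the interval constraints in Definitions~\ref{def:C+R} and Proposition~\ref{prop:M-props}.

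Part (1) is immediate. The quantity $\phi(i,i')$ differs from $pi-i'$ by $(\lambda-1)d$, which is a multiple of $d$. I would also note that $\phi$ really does take values in $\{1,\dots,d\}$. On $T$ we have $pi-i'\le\lambda d$, so $\phi\le d$. By Proposition~\ref{prop:M-props}(2), $pi-i'>(\lambda-1)d$, so $\phi\ge1$.

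For part (2), fix $(i,\lambda)$. Then $\phi(i,i')=pi-(\lambda-1)d-i'$ is a strictly decreasing function of $i'$, so its values at distinct $i'\in C_J$ are distinct. This uses only that $\lambda$ is determined by the row.

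Part (3) is the only step with content. Fix $i'\in C_J$ and suppose $(i_1,\lambda_1)$ and $(i_2,\lambda_2)$ in $R_J$ satisfy $\phi(i_1,i')=\phi(i_2,i')$.

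\emph{Step 1: reduce to equal rows modulo $d$.} Since both values lie in $\{1,\dots,d\}$, part (1) shows they are equal exactly when $pi_1\equiv pi_2\pmod d$. As $p\nmid d$, this gives $i_1\equiv i_2\pmod d$.

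\emph{Step 2: bound the rows.} As in the proof of Proposition~\ref{prop:comp}, every $(i,\lambda)\in R_J$ satisfies
\[0<i<\frac{(p(\lambda+1)-J)d}{p^2}\le\frac{(pJ-J)d}{p^2}<d.\]
So $i_1,i_2\in(0,d)$, and the congruence forces $i_1=i_2$. Since $\lambda$ is determined by $i$, we get $\lambda_1=\lambda_2$. The entire argument therefore reduces to the bound $i<d$ for all $(i,\lambda)\in R_J$, which holds for every $1<J<p$ because $\lambda\le J-1$.

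I expect no real obstacle. The only point needing care is that the injectivity in (3) uses both that $\phi$ takes values in a complete residue system $\{1,\dots,d\}$, which requires Proposition~\ref{prop:M-props}(2), and that $R_J$ sits inside $(0,d)$.
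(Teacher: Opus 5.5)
Your proof is correct and follows the paper's argument. Parts (1) and (2) come straight from the definition, and (3) comes from reducing $\phi$ modulo $d$, cancelling $p$ because $\gcd(p,d)=1$, and using $0<i<d$; you usefully justify that bound, whereas the paper only asserts it. One small overstatement: (3) does not need $\phi$ to take values in $\{1,\dots,d\}$ (or Proposition~\ref{prop:M-props}(2)), because equal integers are automatically congruent modulo $d$ — only the bound $0<i<d$ on the rows is used.
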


\begin{proof}\mbox{}
  (1) and (2) are obvious from the definition of $\phi$.

  (3) Since $0<i<d$, the values of $i$ are distinct modulo $d$, and
  since $p\nodiv d$, so are the values of $pi-i'$ for $i'$ fixed.  But
  $\phi(i,i')\equiv pi-i'\pmod d$, so the values of $\phi$ for a fixed
  $i'$ and varying $i$ are distinct.
\end{proof}

Here are the values of $\phi$ in the example where $p=5$, $d=18$, and
$J=3$:
\[\begin{pmatrix}
14&13&12&11&9&8\\
&18&17&16&14&13\\
&&&&&18\\
11&10&9&8&6&5\\
16&15&14&13&11&10\\
&&&18&16&15
\end{pmatrix}
\]

We now consider $N$, a maximal square minor of $M$.  If
$J\le(p+1)/2$, %\bry{comma?}
we let $N$ be the rightmost $|R_J|$ columns of $M$.  If $J\ge(p+1)/2$,
we let $N$ be the top $|C_J|$ rows of $M$.  In either case,
Proposition~\ref{prop:comp} shows that $N$ is square, so we may
consider its determinant.  We finish the proof of
Theorem~\ref{thm:ref2} by showing that this determinant is a non-zero
element of $S$.

Let $R_N\subseteq R_J$ be the indices of the rows of $N$ and let $C_N\subseteq
C_J$ be the indices of the columns of $N$.  Let $T_N\subseteq T$ be
defined by
\[T_N=T\bigcap\left(R_N\times C_N\right),\]
so $(i,i')\in T_N$ if and only if the entry $N_{i,i'}$ is non-zero.

We consider the permutation expansion of the determinant of $N$.
More precisely, for each bijection $\sigma:R_N\to C_N$, we form the
product
\[Pr(\sigma):=\prod_{i\in R_N} N_{i,\sigma(i)}\]
so that $\det(N)$ is the sum over all $\sigma$ of $\pm Pr(\sigma)$.

We now introduce a monomial ordering and prove that there is a unique
permutation $\sigma$ such that the leading monomial of $Pr(\sigma)$ is
maximal.  This monomial cannot by cancelled by any other monomial in
the permutation expansion of the determinant,
which will establish that the determinant is non-zero.

Define a monomial order on $S$
by declaring that
\[a_0^{e_0}\cdots a_d^{e_d}<a_0^{f_0}\cdots a_d^{f_d}\]
if for the largest index $i$ with $e_i\neq f_i$ we have $e_i<f_i$.
Thus $a_d$ is larger than any monomial in $k[a_0,\dots,a_{d-1}]$.
(For a brief review of the basic facts about monomial orderings, we
refer to \cite[Ch.~2, \S2]{CoxLittleOsheaIVA}.)

Define a grading on $S$ by declaring that $a_i$ has weight $i$.  Then
$b_{\lambda,n}$ is homogeneous of degree $\lambda$ and has weight $n$.

For $0\neq g\in S$,
we write $LM(g)$ for the leading monomial of $g$
with respect to the order above.

Recall that every $(i,\lambda)\in R_N$ is uniquely determined by $i$.
To ease notation, for the rest of the proof we always denote elements
of $R_N$ by $i$ (rather than by $(i,\lambda)$).  In particular, a pair
$(i,i')$ will always denote an element of $R_N\times C_N$.

For a bijection $\sigma:R_N\to C_N$, define its graph by
\[\Gamma(\sigma)=\left\{(i,\sigma(i))
    \mid i\in R_N\right\} \subseteq R_N\times C_N.\]

The following result establishes that $\det(N)\in S$ is non-zero, and
thus completes the proof of Theorem~\ref{thm:ref2}.

\begin{thm}\label{thm:det}\mbox{}
  \begin{enumerate}
  \item If $(i,i')\in T_N$, then
\[LM(N_{i,i'})=LM(b_{\lambda,pi-i'})=a_{\phi(i,i')}a_d^{\lambda-1}.\]
\item $Pr(\sigma)\ne0$ if and only if $\Gamma(\sigma)\subseteq T_N$.
\item If $\Gamma(\sigma)\subseteq T_N$, then
  \[LM(Pr(\sigma))=a_0^{e_0}a_1^{e_1}\cdots
    a_{d-1}^{e_{d-1}}a_d^{e_d+\Lambda}\]
  where for $1\le\ell\le d$,
  \[e_\ell=\left|\left\{i\in R_N|\phi(i,\sigma(i))=\ell\right\}\right|,\]
  and
  \[\Lambda=\sum_{i\in R_N}(\lambda-1).\]
  \textup{(}Recall that each index $i$ in $R_N$ determines a unique
  $\lambda$.\textup{)}
\item There exists a bijection $\sigma_0:R_N\to C_N$ such that
  $\Gamma(\sigma_0)\subseteq T_N$ and such that if
  $\sigma\neq\sigma_0$ is another bijection $\sigma:R_N\to C_N$ with
  $\Gamma(\sigma)\subseteq T_N$, then
  $LM(Pr(\sigma_0))>LM(Pr(\sigma))$.
\item $\det(N)\in R$ is non-zero.
  \end{enumerate}
\end{thm}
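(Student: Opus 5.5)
Parts (1)--(3) are bookkeeping; the real content is (4), and (5) follows from it at once.

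For (1), fix $(i,i')\in T_N$ and put $n=pi-i'$. By Proposition~\ref{prop:M-props}(2) and the definition of $T$ we have $(\lambda-1)d<n\le\lambda d$. A monomial $a_0^{e_0}\cdots a_d^{e_d}$ occurring in $b_{\lambda,n}$ has degree $\lambda$ and weight $n$. Since $n>(\lambda-1)d$, it must have $e_d\ge\lambda-1$. If $e_d=\lambda$, then $n=\lambda d$, so $\phi=d$ and the monomial is $a_d^\lambda=a_{\phi}a_d^{\lambda-1}$. If $e_d=\lambda-1$, the one remaining variable must be $a_{n-(\lambda-1)d}=a_{\phi(i,i')}$. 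In our order $a_d$ dominates everything else, so in either case this monomial is the leading one. Its multinomial coefficient is $\lambda$ or $1$, which is nonzero because $\lambda<p$.

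For (2), note that $S$ is a domain, so $Pr(\sigma)\neq0$ exactly when every factor $N_{i,\sigma(i)}$ is nonzero, that is, when $\Gamma(\sigma)\subseteq T_N$.

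For (3), leading monomials are multiplicative for a monomial order. Multiplying the monomials from (1) gives the stated exponents, with each $a_d^{\lambda-1}$ contributing to $\Lambda$. The extra $a_d$ from a cell with $\phi=d$ is counted in $e_d$.

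For (4), the key observation is that $\Lambda$ does not depend on $\sigma$: it is a sum over rows. So comparing $LM(Pr(\sigma))$ for different $\sigma$ is purely lexicographic in the vector $(e_d,e_{d-1},\dots,e_1)$. Thus $\sigma$ is better the more cells of large $\phi$-value its graph contains, with larger values taking priority. By Proposition~\ref{prop:phi}(2),(3), for each value $v$ the cells of $T_N$ with $\phi=v$ lie in distinct rows and distinct columns.

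I would construct $\sigma_0$ greedily. Run through $v=d,d-1,\dots,1$. At each step, add to the graph every cell with $\phi=v$ whose row and column are not yet used.

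Two things then need proof.
\begin{enumerate}
\item \emph{Completion.} This process ends with a full bijection inside $T_N$.
\item \emph{Uniqueness.} Any $\sigma$ with $\Gamma(\sigma)\subseteq T_N$ that agrees with $\sigma_0$ on all cells of $\phi$-value $>v$ but omits one of the greedily chosen cells of value $v$ has strictly smaller $e_v$. By Proposition~\ref{prop:phi}(3), no other cell of value $v$ can occupy the same row or column in a compatible way, so this comparison is clean.
\end{enumerate}
Induction on $v$ then shows that $\sigma_0$ is the unique maximizer.

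The main obstacle is the completion step: the greedy choice must never paint itself into a corner. To handle it, I would use the explicit structure of $N$. Along each row, $\phi$ decreases as $i'$ increases, and by Proposition~\ref{prop:M-props}(1) the zeros of a row sit to the left of its $a_d^\lambda$ entry, which is the cell with $\phi=d$. Together these give a staircase shape. I would also use the injections of Proposition~\ref{prop:comp}, $i'=\pm(pi-\lambda d)$, which suggest a candidate perfect matching to which the greedy $\sigma_0$ should be compared. In the case $J\le(p+1)/2$, $N$ consists of the rightmost columns. In the case $J\ge(p+1)/2$, $N$ consists of the top rows. I would first try to show in each case that the greedy choice coincides with that explicit matching, checking along the way that each greedily chosen cell lies in $T_N$.

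Finally, (5) follows from (4): $LM(Pr(\sigma_0))$ is strictly larger than the leading monomial of every other nonzero term in the permutation expansion. It therefore cannot cancel, so $\det(N)\neq0$.
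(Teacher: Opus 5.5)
Your handling of (2), (3), (5) and the lexicographic comparison in (4) matches the paper. The step you yourself identify as the crux, that the greedy procedure ends in a full bijection, is left unproved, and the route you sketch for it would fail.

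The injections of Proposition~\ref{prop:comp} go between $R_J$ and $C_J$, not between $R_N$ and $C_N$, so $\sigma_0$ cannot coincide with them in general. For $p=5$, $d=18$, $J=2$ one has $R_N=R_J=\{3,4,5\}$ (all with $\lambda=1$) and $C_N=\{7,8,9\}$. The map $i\mapsto|pi-\lambda d|$ sends $R_N$ to $\{2,3,7\}$, whereas the greedy matching is $5\mapsto7$, $4\mapsto8$, $3\mapsto9$.

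The missing observation is short. $C_N$ is either all of $C_J$ or an upper segment of it, so Proposition~\ref{prop:M-props}(1) shows that every zero entry of $N$ lies in a row of $\Rs_d$. After the first step, the block with rows $R_N\setminus\Rs_d$ and columns $C_N\setminus\Cs_d$ is therefore square, because the chosen cells form a partial matching. It also has no zero entries, so each of its cells carries a $\phi$-value in $\{1,\dots,d-1\}$. Step $v$ removes every cell of value $v$ still present in the block, and it removes equally many rows and columns. Hence after step $1$ the block is a square containing no cells, i.e.\ it is empty.

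The same exhaustion property is also what your uniqueness comparison really uses. After step $v+1$ no cell of value $>v$ survives, and the surviving cells of value $v$ are exactly the greedy ones. Proposition~\ref{prop:phi}(3) alone does not give this.

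Separately, your proof of (1) rests on a false claim: a monomial of $b_{\lambda,n}$ need not have $e_d\ge\lambda-1$. In the paper's example ($d=18$), $b_{2,29}=2(a_{11}a_{18}+a_{12}a_{17}+a_{13}a_{16}+a_{14}a_{15})$. The correct argument has three parts. First, $e_d\le\lambda-1$ unless $n=\lambda d$. Second, $e_d=\lambda-1$ forces the monomial $a_{\phi(i,i')}a_d^{\lambda-1}$, which occurs with coefficient $\lambda\not\equiv0\pmod p$ (or $1$ when $\phi=d$). Third, the order compares exponents of $a_d$ first. The conclusion of (1) is right, but not for the reason you give.
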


\begin{proof}\mbox{}
  (1) Note that $b_{\lambda,pi-i'}$ is homogeneous of degree $\lambda$
  and has weight $pi-i'=(\lambda-1)d+\phi(i,i')$.  Clearly
  $a_{\phi(i,i')}a_d^{\lambda-1}$ is the largest monomial of degree
  $\lambda$ and weight $(\lambda-1)d+\phi(i,i')$, and it appears in
  $f^\lambda$ with coefficient $\lambda$.

  (2) The product $Pr(\sigma)$ is non-zero if and only if each of the factors is
  non-zero if and only if $\Gamma(\sigma)\subseteq T_N$.

  (3) If $\Gamma(\sigma)\subseteq T_N$, then
  \[ LM(Pr(\sigma))=\prod_{i\in R} LM(N_{i,\sigma(i)}),\]
  and the result then follows from part (1).

  (4) We construct $\sigma_0$ with a greedy algorithm with steps
  indexed by $d,d-1,\dots,1$.  (See below for an example.)  At step
  $d$, consider
\[\Rs_d:=\left\{i\in R_N\mid\text{there is an $i'\in C_N$ with
      $\phi(i,i')=d$}\right\}.\]
For each $i\in \Rs_d$, part (2) of Proposition~\ref{prop:phi}
guarantees that there is a unique $i'$ such that $\phi(i,i')=d$, and
we set $\sigma_0(i)=i'$. This defines a map $\sigma_0:\Rs_d\to C_N$.
Part (1) of Proposition~\ref{prop:phi} guarantees that $\sigma_0$ is
injective.  Let $\Cs_d=\sigma_0(\Rs_d)$.  (If $\Rs_d$ happens to be
empty, we set $\Cs_d=\emptyset$ and proceed to the next step of the
algorithm.)

Note that Proposition~\ref{prop:M-props} (1) says that any zero
entries of $N$ occur in the same rows as entries where $\phi(i,i')=d$,
so if $i\in R_N\setminus \Rs_d$ and $i'\in C_N\setminus \Cs_d$, then
$N_{i,i'}\neq0$, i.e., $(i,i')\in T_N$ and $\phi$ is defined at
$(i,i')$.

In step $d-1$ of the algorithm, let
\[\Rs_{d-1}:=\left\{i\in R_N\setminus \Rs_d\mid\text{there is an $i'\in
      C_N\setminus \Cs_d$ with $\phi(i,i')=d-1$}\right\}.\]
Again, for each $i\in \Rs_{d-1}$, there is a unique
$C_N\setminus \Cs_d$ such that $\phi(i,i')=d-1$, and we set
$\sigma_0(i)=i'$.  (If $\Rs_{d-1}=\emptyset$, we set
$\Cs_{d-1}=\emptyset$ and proceed to the next step.)  This defines
$\sigma_0$ on $\Rs_d\cup \Rs_{d-1}$ and this map is injective.  Define
$\Cs_{d-1}:=\sigma_0(\Rs_{d-1})$.

We now continue, decreasing the sought after value of $\phi$ by one at
each step.  After $d$ steps, we have defined disjoint unions
\[R_N=\Rs_d\cup\cdots\cup\Rs_1
  \and
  C_N=\Cs_d\cup\cdots\cup\Cs_1,\]
  and a bijection $\sigma_0:R_N\to C_N$ such that
  $\sigma_0(\Rs_\ell)=\Cs_\ell$ and such that for $i\in \Rs_\ell$, we
  have $\phi(i,\sigma_0(i))=\ell$.

  Now suppose $\sigma\neq\sigma_0$ is another bijection $R_N\to C_N$.
  Let $\ell$ be the largest integer in $\{1,\dots,d\}$ such that
  $\sigma |_{\Rs_\ell}\neq\sigma_{0}|_{\Rs_\ell}$. Using part (3) of
  Theorem~\ref{thm:det}, we see that the exponents of
  $a_{\ell+1},\dots,a_d$ in $LM(Pr(\sigma))$ and $LM(Pr(\sigma_0))$
  are equal, but the exponent of $a_\ell$ in $LM(Pr(\sigma_0))$ is
  strictly larger than the exponent of $a_\ell$ in $LM(Pr(\sigma))$.
  This shows that $LM(Pr(\sigma_0))>LM(Pr(\sigma))$, as desired.

  (5) Part (4) shows that $LM(Pr(\sigma_0))$ is strictly larger than
  $LM(Pr(\sigma))$ for any bijection $\sigma\neq\sigma_0$ such that
  $Pr(\sigma)\neq0$.  Thus $LM(Pr(\sigma_0))$ cannot be cancelled by
  any monomial in $Pr(\sigma)$ for any $\sigma\neq\sigma_0$, and we
  conclude that $LM(Pr(\sigma_0))$ appears in $\det(N)$ with a
  non-zero coefficient.  This shows that $\det(N)\neq0$ and completes
  the proof of the Theorem.
\end{proof}

In the example where $p=5$, $d=18$, and $J=3$, $R_N=\{3,4,5,6,7,8\}$,
$C_N=\{1,2,3,4,6,7\}$, and the non-zero subsets $\Rs_\ell$ and
$\Cs_\ell$ are
\begin{center}
\begin{tabular}{c c}
$\Rs_{18}=\{4,5,8\}$,&$\Cs_{18}=\{2,4,7\}$,\\
$\Rs_{16}=\{7\}$,&$\Cs_{16}=\{1\}$,\\
$\Rs_{12}=\{3\}$, &$\Cs_{12}=\{3\}$,\\
$\Rs_6=\{6\}$,&$\Cs_6=\{6\}$.  
\end{tabular}
\end{center}
The permutation $\sigma_0$ takes the following values:
\[    \begin{array}{c|cccccc}
      i & 4 & 5 & 8& 7 & 3 & 6\\
      \hline
      \sigma_0(i)&2& 7 & 4 & 1 & 3 & 6
    \end{array}\]
  and
  \[LM(\Pr(\sigma_0))=a_6a_{12}a_{16}a_{18}^6,\]
  \begin{proof}[Proof that Theorem~\ref{thm:det} implies
    Theorem~\ref{thm:ref2}]
    For each $J$, let $U_J\subseteq A$ be the complement of the zero set
    of the determinant of $N$ studied in Theorem~\ref{thm:det}.  By
    that result, $U_J$ is non-empty.  If $U=\cap_J U_J$ and if
    $a\in U(k)$, then for each $J$ in $\{2,\dots,d\}$, the determinant
    of $N$ specializes to a non-zero element of $k$, so $N$ has full
    rank.  This implies that the matrix $M_J$ and the map $\Phi_J$
    computed for $Y_a$ have maximal rank, and this is exactly the
    assertion of Theorem~\ref{thm:ref2}.
  \end{proof}

  \begin{rem}
    It is clear that the open set $U$ is defined over the prime field,
    but we do not know whether it always has a rational point over
    that field.  Thus we may need to extend $k$ to guarantee the
    existence of a point $a\in U(k)$.
  \end{rem}

\bibliography{database}

\end{document}

%% file: formatting.tex
\numberwithin{equation}{section}

\theoremstyle{plain}
\newtheorem{thm}[subsection]{Theorem}

\newtheorem{prop}[subsection]{Proposition}

\newtheorem{cor}[subsection]{Corollary}

\newtheorem*{thm*}{Theorem}
\newtheorem*{mthm*}{Main Theorem}
\newtheorem{prop-def}[subsection]{Proposition-Definition}
\newtheorem{def-prop}[subsection]{Definition-Proposition}
\newtheorem{def-lemma}[subsection]{Definition-Lemma}

\theoremstyle{definition}
\newtheorem{defn}[subsection]{Definition}
\newtheorem{defns}[subsection]{Definitions}
\newtheorem*{defn*}{Definition}

\theoremstyle{remark}
\newtheorem{rem}[subsection]{Remark}
\newtheorem{rems}[subsection]{Remarks}

\newtheorem{s-example}[subsection]{Example}
\newtheorem{s-examples}[subsection]{Examples}

%% file: macros.tex
\usepackage[OT2,T1]{fontenc}
\DeclareSymbolFont{cyrletters}{OT2}{wncyr}{m}{n}
\DeclareMathSymbol{\sha}{\mathalpha}{cyrletters}{"58}

\newcommand{\CC}{\mathcal{C}}

\newcommand{\kbar}{{\overline{k}}}

\newcommand{\Z}{\mathbb{Z}}

\newcommand{\A}{\mathbb{A}}
\renewcommand{\P}{\mathbb{P}}

\newcommand{\<}{\langle}
\renewcommand{\>}{\rangle}
\newcommand{\into}{\hookrightarrow}

\newcommand{\isoto}{\,\tilde{\to}\,}

\newcommand{\nodiv}{\not|}

\def\nodiv{\mathrel{\mathchoice{\not|}{\not|}{\kern-.2em\not\kern.2em|}
{\kern-.2em\not\kern.2em|}}}

\newcommand{\labeledto}[1]{\overset{#1}{\to}}
\newcommand{\labeledlongto}[1]{\overset{#1}{\longrightarrow}}

\DeclareMathOperator{\ord}{ord}

\DeclareMathOperator{\gal}{Gal}

\def\clap#1{\hbox to 0pt{\hss#1\hss}}

\makeatletter
\newcommand*\bigcdot{\mathpalette\bigcdot@{.5}}
\newcommand*\bigcdot@[2]{\mathbin{\vcenter{
               \hbox{\scalebox{#2}{$\m@th#1\bullet$}}}}}
\makeatother